\theoremstyle{plain}
\newtheorem{lemma}{Lemma}[section]
\newtheorem{theorem}[lemma]{Theorem}
\newtheorem{proposition}[lemma]{Proposition}
\newtheorem{corollary}[lemma]{Corollary}
\theoremstyle{definition}
\newtheorem{definition}[lemma]{Definition}
\newtheorem{remark}[lemma]{Remark}
\numberwithin{equation}{section}
\newcommand{\supp}{\text{\rm supp}}
\newcommand{\loc}{\text{\rm loc}}
\newcommand{\ve}{\varepsilon}
\newcommand{\erre}{\mathbb{R}}
\newcommand{\enne}{\mathbb{N}}
\newcommand{\f}{\varphi}
\newcommand{\G}{\mathcal{G}}
\newcommand{\gammA}{\boldsymbol{\gamma}}
\renewcommand{\r}{\varrho}
\begin{document}
\title{Local Curvature-Dimension condition implies Measure-Contraction property}

\author{Fabio Cavalletti} 
\address{Hausdorff Center for Mathematics, Endenicher Allee  62, D-53115 Bonn}
\email{fabio.cavalletti@hcm.uni-bonn.de}

\author{Karl-Theodor Sturm}
\address{Institut f\"ur Angewandte Mathematik 
Abt. Stochastische Analysis,
Endenicher Allee 60, D-53115 Bonn}
\email{sturm@uni-bonn.de}

\begin{abstract}
We prove that for non-branching metric measure spaces the local curvature condition $\mathsf{CD}_{loc}(K,N)$ implies 
the global version of $\mathsf{MCP}(K,N)$. 
The curvature condition $\mathsf{CD}(K,N)$ introduced by the second author and also studied by Lott \& Villani is the 
generalization to metric measure space of lower bounds on Ricci curvature together with upper bounds on the dimension.
This paper is the following step of \cite{sturm:loc} where it is shown that $\mathsf{CD}_{loc}(K,N)$ is equivalent to 
a global condition $\mathsf{CD}^{*}(K,N)$, slightly weaker than the usual $\mathsf{CD}(K,N)$. 
It is worth pointing out that our result implies sharp Bishop-Gromov volume growth inequality and sharp Poincar\'e inequality.
\end{abstract}

\maketitle


\section{Introduction}

An important class of singular spaces is the one of
metric measure spaces with generalized lower bounds on the Ricci curvature formulated in terms of optimal transportation. 
This class of spaces together with the condition on lower bounds on curvature 
have been introduced by the second author in \cite{sturm:MGH1, sturm:MGH2} and 
independently by Lott and Villani in \cite{villott:curv}.

The condition called \emph{curvature-dimension condition} $\mathsf{CD}(K,N)$ depends on two parameters $K$ and $N$, playing the role
of a curvature and dimension bound, respectively. We recall two important properties of the condition $\mathsf{CD}(K,N)$:  
\begin{itemize}
\item the curvature-dimension condition is stable under convergence of metric measure spaces with respect to the $L^{2}$-transportation distance 
$\mathbb{D}$ introduced in \cite{sturm:MGH1}; 
\item a complete Riemannian manifold satisfies $\mathsf{CD}(K,N)$ if and only if its Ricci curvature is bounded from below by $K$ and 
its dimension from above by $N$.
\end{itemize}
Moreover a broad variety of geometric and functional analytic properties can be deduced from the curvature-dimension condition $\mathsf{CD}(K,N)$: 
the Brunn-Mikowski inequality, the Bishop-Gromov volume comparison theorem, the Bonnet-Myers theorem, the doubling property and local Poincar\'e inequalities on balls. All this listed results are quantitative results (volume of intermediate points, volume growth, upper bound 
on the diameter and so on) depending on $K,N$.

A variant of $\mathsf{CD}(K,N)$ is the \emph{measure-contraction property}, $\mathsf{MCP}(K,N)$, introduced in 
\cite{ohta:mcp} and \cite{sturm:MGH2}. In the setting of non-branching metric measure spaces it is proven that condition $\mathsf{CD}(K,N)$
implies $\mathsf{MCP}(K,N)$. While $\mathsf{CD}(K,N)$ is a condition on the optimal transport between  
any pair of absolutely continuous (w.r.t. $m$) probability measure on $M$, $\mathsf{MCP}(K,N)$ is a condition on the optimal transport
between Dirac masses and the uniform distribution $m$ on $M$.
Nevertheless a great part of the geometric and functional analytic properties verified by spaces satisfying the condition $\mathsf{CD}(K,N)$
are also verified by spaces satisfying the $\mathsf{MCP}(K,N)$:
\begin{itemize}
\item generalized Bishop-Gromov volume growth inequality;
\item doubling property;
\item a bound on the Hausdorff dimension; 
\item generalized Bonnet-Myers theorem.
\end{itemize}
Again this results are in a quantitative form depending on $K,N$.
For a complete list of analytic consequences of the measure contraction property see \cite{sturm:MGH2}.

Among the relevant questions on $\mathsf{CD}(K,N)$ that are still open, we are interested in studying the following one: 
can we say that a metric measure space $(M,d,m)$ satisfies $\mathsf{CD}(K,N)$ provided $\mathsf{CD}(K,N)$ 
holds true locally on a family of sets $M_{i}$ covering $M$? \\
In other words it is still not known whether $\mathsf{CD}(K,N)$ verifies the globalization property (or the local-to-global property).

A partial answer to this problem is contained in the work by Bacher and the second author \cite{sturm:loc}: they proved that 
if a metric measure space $(M,d,m)$ verifies
the local curvature-dimension condition $\mathsf{CD}_{loc}(K,N)$ then it verifies the global reduced curvature-dimension condition 
$\mathsf{CD}^{*}(K,N)$. The latter is strictly weaker than $\mathsf{CD}(K,N)$ and a converse implication can be obtained 
only changing the value of the lower bound on the curvature: condition $\mathsf{CD}^{*}(K,N)$ implies $\mathsf{CD}(K^{*},N)$ where 
$K^{*}= K(N-1)/N$. Therefore $\mathsf{CD}^{*}(K,N)$ gives worse geometric and analytic information than $\mathsf{CD}(K,N)$.

In this paper we prove that if $(M,d,m)$ is a non-branching metric measure space that verifies $\mathsf{CD}_{loc}(K,N)$ then $(M,d,m)$ verifies $\mathsf{MCP}(K,N)$.

Hence our result implies that from the local condition $\mathsf{CD}_{loc}(K,N)$ 
one can obtain all the global geometric and functional analytic consequences implied by $\mathsf{MCP}(K,N)$ 
and therefore the geometric and functional analytic consequences are obtained in the sharp quantitative version. 

We now present our approach to the problem.\\
As already pointed out, the curvature-dimension condition $\mathsf{CD}(K,N)$ prescribes  how the volume of a given set is affected by curvature 
when it is moved via optimal transportation. Condition $\mathsf{CD}(K,N)$ impose that the distortion is ruled by the coefficient $\tau_{K,N}^{(t)}(\theta)$
depending on the curvature $K$, on the dimension $N$, on the time of the evolution $t$ and on the point $\theta$. 

The main feature of the coefficient $\tau_{K,N}^{(t)}(\theta)$ is that it is obtained mixing two different information on how the volume should evolve:
an $(N-1)$-dimensional distortion depending on the curvature $K$ by  and a one dimensional evolution that doesn't feel the curvature. To be more precise
\[
\tau_{K,N}^{(t)}(\theta) = t^{1/N} \sigma_{K,N-1}^{(t)}(\theta)^{(N-1)/N},
\] 
where $\sigma_{K,N-1}^{(t)}(\theta)^{(N-1)/N}$ contains the information on the $(N-1)$-dimensional volume distortion and 
the evolution in the remaining direction is ruled just by $t^{1/N}$. This is a clear similarity with the Riemannian case.

Our aim is, starting from $\mathsf{CD}_{loc}(K,N)$, to isolate a local $(N-1)$-dimensional condition ruled by the coefficient $\sigma_{K,N-1}^{(t)}(\theta)$ 
and then, using the easier structure of $\sigma_{K,N-1}^{(t)}(\theta)$, obtain a global $(N-1)$-dimensional condition 
with coefficient $\sigma_{K,N-1}^{(t)}(\theta)$. At that point, using H\"older inequality and the linear behavior of the other direction, 
it is not difficult to pass from the $(N-1)$-dimensional version to the full-dimensional version with coefficient $\tau_{K,N}^{(t)}(\theta)$.

However to detect a local $(N-1)$-dimensional condition it is necessary to decompose the whole evolution into a family of $(N-1)$-dimensional evolutions.
Considering the optimal transport between a Dirac mass in $o$ and the uniform distribution $m$, the family of spheres around $x_{0}$
immediately provides the correct $(N-1)$-dimensional evolutions. This motivates why we obtain $\mathsf{MCP}(K,N)$ and not $\mathsf{CD}(K,N)$.

We state the main result of this paper.
\begin{theorem}[Theorem \ref{T:main}]
Let $(M,d,m)$ be a non-branching metric measure space. Assume that $(M,d,m)$ satisfies $\mathsf{CD}_{loc}(K,N)$. 
Then $(M,d,m)$ satisfies $\mathsf{MCP}(K,N)$.
\end{theorem}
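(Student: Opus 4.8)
The plan is to exploit the rigid geometry of the transport problem underlying $\mathsf{MCP}(K,N)$: one must push a Dirac mass at a fixed $o\in\supp m$ onto $\tfrac1{m(A)}m|_A$, and on a non-branching space the $L^{2}$-optimal dynamical plan is forced to move every point radially toward $o$ along a geodesic. First I would fix $o$ and introduce the \emph{ray decomposition} it induces: since the space is non-branching, through $m$-a.e.\ point there passes an essentially unique unit-speed geodesic issuing from $o$, so the transport set $\{x:\ d(o,x)+d(x,y)=d(o,y)\ \text{for some }y\}$ is partitioned into maximal rays $(R_\xi)_{\xi\in\mathfrak Q}$, and one obtains a measurable ray map $g\colon\mathfrak Q\times[0,\infty)\to M$ together with a disintegration $m=\int_{\mathfrak Q}m_\xi\,q(d\xi)$, $\ m_\xi=g(\xi,\cdot)_\#\!\bigl(h_\xi\,\elle^{1}\bigr)$. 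As a purely qualitative input — only to know that the conditionals $m_\xi$ are absolutely continuous with sufficiently regular densities $h_\xi$ — I would invoke \cite{sturm:loc}: $\mathsf{CD}_{loc}(K,N)$ already gives $\mathsf{CD}^{*}(K,N)$, hence $\mathsf{CD}(K(N-1)/N,N)$, hence at least $\mathsf{MCP}(K(N-1)/N,N)$, which forces exactly this regularity. What remains is to sharpen the curvature constant from $K(N-1)/N$ back to $K$, and this is where the local condition must be used in a finer way than its globalized form $\mathsf{CD}^{*}$.

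The heart of the matter is to distil from $\mathsf{CD}_{loc}(K,N)$, for $q$-a.e.\ ray, a one-dimensional Bishop-type inequality for $h_\xi$. The identity $\tau_{K,N}^{(t)}(\theta)=t^{1/N}\,\sigma_{K,N-1}^{(t)}(\theta)^{(N-1)/N}$ is the reason this is enough: along the radial map $g(\xi,r)\mapsto g(\xi,tr)$ the one-dimensional Jacobian is exactly $t=(t^{1/N})^{N}$, which is precisely the factor $t$ appearing in $\tau_{K,N}^{(t)}(\theta)^{N}=t\,\sigma_{K,N-1}^{(t)}(\theta)^{N-1}$, so once it is cancelled the contraction estimate that $\mathsf{MCP}(K,N)$ asks for on the ray $\xi$ is exactly
\[
h_\xi(t b)\ \ge\ \sigma_{K,N-1}^{(t)}(b)^{N-1}\,h_\xi(b),\qquad t\in[0,1],\ \ b\le\diam R_\xi.
\]
To get this from the local condition, I would fix a ray $\xi_0$ and radii $a<b$ lying in one of the neighbourhoods on which $\mathsf{CD}_{loc}(K,N)$ holds, and transport the normalized restriction of $m$ to the thin tube $\{g(\xi,r):\ \xi\in B_\ve(\xi_0),\ r\in[b-\delta,b]\}$ onto its radial contraction with outer radius carried from $b$ to $a$. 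On a non-branching space this $L^{2}$-optimal transport is the radial one and stays inside the good neighbourhood, so $\mathsf{CD}_{loc}(K,N)$ applies and bounds the intermediate densities by $\tau_{K,N}$-coefficients; writing those densities through the $h_\xi$, integrating the bound in $\xi$ over $B_\ve(\xi_0)$, dividing by $q\bigl(B_\ve(\xi_0)\bigr)$, and letting $\ve\to0$ (Lebesgue differentiation on $\mathfrak Q$) and then $\delta\to0$, the angular normalization and the radial Jacobian disappear and the surviving inequality is the displayed one — valid, however, only for $a,b$ in a common small radial window.

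But for $r\le r'$ in such a window the displayed inequality just reads $h_{\xi_0}(r)\ge\sigma_{K,N-1}^{(r/r')}(r')^{N-1}h_{\xi_0}(r')$, and since the reduced coefficient obeys the concatenation identity $\sigma_{K,N-1}^{(r/r')}(r')\,\sigma_{K,N-1}^{(r'/r'')}(r'')=\sigma_{K,N-1}^{(r/r'')}(r'')$, this chains across an overlapping cover of $[0,\diam R_{\xi_0}]$ and hence holds for all radii along the ray: the only globalization the argument needs is this elementary one-dimensional one, which is exactly what fails for $\tau_{K,N}$ and is the whole reason for foliating by spheres. With the per-ray inequality in force for all radii I would finally reassemble $\mathsf{MCP}(K,N)$: for $o$ and $A$ (with the usual restriction $\diam A<\pi\sqrt{(N-1)/K}$ when $K>0$) take the dynamical plan that contracts every ray toward $o$; by the disintegration its time-$t$ marginal has, on the ray $\xi$ at radius $\rho$, density $\tfrac1{m(A)}\,t^{-1}h_\xi(\rho/t)/h_\xi(\rho)$ with respect to $m$, and the per-ray inequality is precisely the bound $t^{-1}h_\xi(\rho/t)/h_\xi(\rho)\le\tau_{K,N}^{(t)}(\rho/t)^{-N}$ that $\mathsf{MCP}(K,N)$ demands, so integrating against $q$ yields the global estimate, once one checks that the part of $A$ not met by rays from $o$ is $m$-negligible. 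I expect the main obstacle to be the middle step: setting up the ray decomposition with enough measurability and absolute continuity to carry the computation, identifying the $L^{2}$-optimal plans with the radial ones in the non-branching setting, and — most delicately — justifying the joint passage to the limit in $\ve$ and $\delta$ together with the integration in $\xi$ that turns the $\tau_{K,N}$-bound for tubes into the pointwise $\sigma_{K,N-1}$-bound along a single ray.
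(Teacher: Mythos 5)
Your overall architecture is the same as the paper's — fix $o$, set up polar coordinates around it, distil a codimension-$1$ inequality governed by $\sigma_{K,N-1}$ from the local $\tau_{K,N}$-bounds, globalize that inequality (possible precisely because $\sigma$ has a nice concatenation law, unlike $\tau$), and then re-assemble the $\tau_{K,N}$-bound along geodesics to the Dirac. Your ray disintegration $m=\int m_\xi\,q(d\xi)$ with $m_\xi=h_\xi\,\elle^1$ is the mirror image of the paper's sphere disintegration $m=\int m_r\,dr$ with the surface density $h_r(\gamma)$: up to normalization one has $h_r(\gamma)\propto 1/h_\xi(r)$ along the ray $\xi$ through $\gamma$, so the paper's inequality $h^{-1/(N-1)}_{R_t}\ge\sigma^{(1-t)}h^{-1/(N-1)}_{R_0}+\sigma^{(t)}h^{-1/(N-1)}_{R_1}$ is exactly $\sigma$-concavity of $h_\xi^{1/(N-1)}$, of which your one-sided bound $h_\xi(tb)\ge\sigma^{(t)}_{K,N-1}(b)^{N-1}h_\xi(b)$ is the specialization obtained by pushing $R_0\to0$ and using $h_\xi(0)\ge0$. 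So the targets are equivalent and your $\sigma$-concatenation identity does globalize the one-sided form correctly.

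The gap is in the middle step, precisely the one you flag as "most delicate": how to extract the $\sigma_{K,N-1}$-inequality from a single application of $\mathsf{CD}_{loc}(K,N)$ to thin tubes. You transport a tube $\{g(\xi,r):\xi\in B_\ve(\xi_0),\,r\in[b-\delta,b]\}$ to its radial contraction toward $o$, so the endpoint tube has width $\delta\,a/b$, i.e.\ the two widths are tied to the radii by the contraction. With that specific choice the $\mathsf{CD}$ inequality, written pointwise and after letting $\ve,\delta\to0$, reads (with $\theta=b-a$, $c(s)=1-s+s\,a/b$)
\begin{equation*}
\bigl(c(s)\,h_{\xi_0}(c(s)b)\bigr)^{1/N}\ \ge\ \tau^{(1-s)}_{K,N}(\theta)\,h_{\xi_0}(b)^{1/N}+\tau^{(s)}_{K,N}(\theta)\bigl((a/b)\,h_{\xi_0}(a)\bigr)^{1/N},
\end{equation*}
which is a three-point constraint on $h_{\xi_0}$ at the $1/N$-power; the Jacobian factors $c(s)^{1/N}$, $1$, $(a/b)^{1/N}$ and the $t^{1/N}$ buried inside $\tau$ do \emph{not} cancel against one another, and setting $s=1$ makes the inequality trivial, so the "surviving inequality" is \emph{not} $h_{\xi_0}(a)\ge\sigma^{(a/b)}_{K,N-1}(b)^{N-1}h_{\xi_0}(b)$. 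The paper's way out — the whole point of Theorem \ref{T:surface} — is to decouple the two tube widths $L_0,L_1$ from the radii, apply $\mathsf{CD}_{loc}$ at the midtime $t=1/2$, let the transverse parameter $s\searrow0$ to land on the tube edges, and then \emph{optimize over} $L_0,L_1$; the optimal choice $L_i\propto h^{-1/(N-1)}_{R_i}(\gamma)$ is what makes H\"older sharp and converts the $1/N$-power inequality \eqref{E:splitting} into the $1/(N-1)$-power midpoint inequality \eqref{E:surface}. Without this free-parameter optimization your tube argument produces at best an inequality at the wrong power, and it is a two-sided midpoint constraint rather than the one-sided bound you want: the one-sided form is not a local statement (its infinitesimal version $\partial_r\log h_\xi(r)\ge(N-1)\sqrt{K/(N-1)}\cot(r\sqrt{K/(N-1)})$ explicitly carries the distance $r$ to $o$), so it cannot come out of a single application of $\mathsf{CD}_{loc}$ in a window far from $o$; you must first globalize the two-sided $\sigma$-concavity (as the paper does via the dyadic Lemma \ref{L:mid} and Theorem \ref{T:loctoglob}, or via your concatenation once the two-sided form is in hand) and only then feed in the boundary behaviour at $r=0$.

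A second, more minor point: in the reassembly you want to know that the $L^2$-optimal dynamical plan from $\tfrac1{m(A)}m|_A$ to $\delta_o$ is the radial one and that its intermediate densities are controlled ray by ray. The paper handles this by lifting to a measure $\nu$ on geodesics (Remark \ref{R:lifting}), using the pointwise formulation \eqref{E:cdpunto} available in the non-branching setting, and computing the intermediate densities explicitly (Proposition \ref{P:cdgeod}); your sketch invokes Lebesgue differentiation in the quotient variable $\xi$, which can be made to work but needs the same measurable-selection and strong-consistency apparatus (Theorem \ref{T:disintr}, Corollary \ref{C:disintegration}) that the paper sets up; as written it is not a shortcut.
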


We end this paper with an outlook on the most general case we can address using the approach described so far.

\section{Preliminaries}\label{S:preli}

Let $(M,d)$ be a metric space. 
The length $\mathsf{L}(\gamma)$ of a continuous curve $\gamma : [0,1] \to M$ is defined as
\[
\mathsf{L}(\gamma) : = \sup \sum_{k=1}^{n} d(\gamma(t_{k-1}),\gamma(t_{k}))
\]
where the supremum runs over $n \in \enne$ and over all partitions $0 = t_{0} < t_{1}< \dots < t_{n}=1$. 
Clearly $\mathsf{L}(\gamma) \geq d(\gamma(0),\gamma(1))$. The curve is called \emph{geodesic} if and only if 
$\mathsf{L}(\gamma) = d(\gamma(0),\gamma(1))$.
In this case we always assume that $\gamma$ has constant speed, i.e. 
$\mathsf{L}(\gamma\llcorner_{[s,t]}) =|s-t|\mathsf{L}(\gamma)= |s-t|d(\gamma(0),\gamma(1))$ for 
every $0\leq s \leq t \leq 1$.

With $\G(M)$ we denote the space of geodesic $\gamma: [0,1] \to M$ in $M$, regarded as subset of $Lip([0,1],M)$
of Lipschitz functions equipped with the topology of uniform convergence.

$(M,d)$ is said a \emph{length space} if and only if for all $x,y \in M$,
\[
d(x,y) = \inf \mathsf{L}(\gamma)
\]
where the infimum runs over all continuous curves connecting $x$ to $y$. It is said to be a \emph{geodesic space} if and only if 
every $x,y \in M$ are connected by a geodesic.
\begin{definition}
A geodesic space $(M,d)$ is \emph{non-branching} if and only if for all $r \geq 0$ and $x,y \in M$ such that $d(x,y)=r/2$ the set 
\[
\{z \in M : d (x,z) = r \} \cap \{ z \in M : d(y,z) = r/2 \}
\]
is a singleton.
\end{definition}

A \emph{metric measure space} will always be a triple $(M,d,m)$ where 
$(M,d)$ is a complete separable metric space and $m$ is a locally finite measure (i.e. $m(B_{r}(x))< \infty$ for all $x\in M$ 
and all sufficiently small $r>$0) on $M$ equipped with its Borel $\sigma$-algebra. We exclude the case $m(M)=0$.
A \emph{non-branching} metric measure space will be a metric measure space $(M,d,m)$ such that $(M,d)$ is a non-branching geodesic space.
Throughout the following we will use the notation $B_{p}(z) = \{ x : d(x,z)< p \}$.

\subsection{Geometry of metric measure spaces}\label{Ss:geom}

$\mathcal{P}_{2}(M,d)$ denotes the $L^{2}$-Wasserstein space of probability measures on $M$ and $d_{W}$ the corresponding $L^{2}$-Wasserstein distance. 
The subspace of $m$-absolutely continuous measures is denoted by $P_{2}(M, d, m)$.
A point $z$ will be called $t$-intermediate point of points $x$ and $y$ if $d(x, z)=td(x, y)$ and $d(z, y)=(1-t)d(x, y)$.
 
The following are well-known results in optimal transportation and are valid for general metric measure spaces.
\begin{lemma}\label{L:geod}
Let $(M,d,m)$ be a metric measure space.
For each geodesic $\Gamma: [0,1] \to \mathcal{P}_{2}(M)$ there exists a probability measure $\Xi$ on $\G(M)$ such that 
\begin{itemize}
\item $e_{t\,\sharp} \Xi = \Gamma(t)$ for all $t \in [0,1]$;
\item for each pair $(s,t)$ the transference plan $(\gamma_{s},\gamma_{t})_{\sharp} \Xi$ is an optimal coupling.
\end{itemize}
\end{lemma}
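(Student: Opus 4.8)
The plan is the classical ``discretize, glue, compactify'' construction that represents a Wasserstein geodesic by a measure on geodesics. Fix $n\ge1$ and dyadic times $t_{j}=j\,2^{-n}$, $j=0,\dots,2^{n}$. For each $j$ pick an optimal coupling $\pi_{j}$ of $\Gamma(t_{j-1})$ and $\Gamma(t_{j})$ (existence of optimal couplings in $\mathcal{P}_{2}(M)$ is standard) and glue the $\pi_{j}$ along their common marginals $\Gamma(t_{j})$ to get $\alpha_{n}\in\mathcal{P}(M^{2^{n}+1})$ with $(p_{j})_{\sharp}\alpha_{n}=\Gamma(t_{j})$ and $(p_{j-1},p_{j})_{\sharp}\alpha_{n}=\pi_{j}$. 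By the Kuratowski--Ryll-Nardzewski selection theorem there is a Borel map $(x,y)\mapsto\gamma^{x,y}$ choosing a constant-speed geodesic from $x$ to $y$ for $\pi_{j}$-a.e.\ pair (such geodesics exist for these pairs because $\Gamma$, being a geodesic in $\mathcal{P}_{2}(M)$, provides midpoint measures, hence, again by measurable selection, midpoints in $M$; iterate dyadically). Pushing $\alpha_{n}$ forward by the map that concatenates the reparametrized $\gamma^{x_{j-1},x_{j}}$ over $[t_{j-1},t_{j}]$ gives $\Xi_{n}\in\mathcal{P}(C([0,1];M))$, concentrated on curves geodesic on every dyadic subinterval, with $(e_{t_{j}})_{\sharp}\Xi_{n}=\Gamma(t_{j})$ and $(e_{t_{j-1}},e_{t_{j}})_{\sharp}\Xi_{n}=\pi_{j}$.

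The uniform control needed is the bound $\int\mathsf{L}(\gamma)^{2}\,d\Xi_{n}\le d_{W}(\Gamma(0),\Gamma(1))^{2}$, valid for all $n$ since, by Minkowski's inequality in $L^{2}(\alpha_{n})$ and the geodesic identity $d_{W}(\Gamma(t_{j-1}),\Gamma(t_{j}))=2^{-n}d_{W}(\Gamma(0),\Gamma(1))$,
\[
\Bigl(\int\mathsf{L}(\gamma)^{2}\,d\Xi_{n}\Bigr)^{1/2}=\Bigl\|\textstyle\sum_{j}d(x_{j-1},x_{j})\Bigr\|_{L^{2}(\alpha_{n})}\le\sum_{j}d_{W}(\Gamma(t_{j-1}),\Gamma(t_{j}))=d_{W}(\Gamma(0),\Gamma(1)).
\]
Applied to blocks of consecutive subintervals, the same estimate together with Chebyshev's inequality bounds the probability that a curve of $\Xi_{n}$ oscillates by more than $\varepsilon$ over a time-interval of length $\delta$; thus $\{\Xi_{n}\}$ has a uniform modulus of continuity in probability, and since the $e_{0}$-marginal is the fixed measure $\Gamma(0)$, $\{\Xi_{n}\}$ is tight in $\mathcal{P}(C([0,1];M))$. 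Pass to a weakly convergent subsequence $\Xi_{n_{k}}\to\Xi$.

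It remains to identify $\Xi$. For dyadic $t$, eventually a node, $(e_{t})_{\sharp}\Xi_{n_{k}}=\Gamma(t)$ and $e_{t}$ is continuous, so $(e_{t})_{\sharp}\Xi=\Gamma(t)$; density of the dyadics and continuity of both $t\mapsto\Gamma(t)$ and $t\mapsto(e_{t})_{\sharp}\Xi$ extend this to all $t\in[0,1]$. By lower semicontinuity of $\gamma\mapsto\mathsf{L}(\gamma)^{2}$ under uniform convergence, $\int\mathsf{L}(\gamma)^{2}\,d\Xi\le\liminf_{k}\int\mathsf{L}(\gamma)^{2}\,d\Xi_{n_{k}}\le d_{W}(\Gamma(0),\Gamma(1))^{2}$; conversely $\mathsf{L}(\gamma)\ge d(\gamma_{0},\gamma_{1})$ and any coupling of $\Gamma(0),\Gamma(1)$ costs at least $d_{W}(\Gamma(0),\Gamma(1))^{2}$, so $\int\mathsf{L}(\gamma)^{2}\,d\Xi\ge\int d(\gamma_{0},\gamma_{1})^{2}\,d\Xi\ge d_{W}(\Gamma(0),\Gamma(1))^{2}$. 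Hence equality holds throughout: $\mathsf{L}(\gamma)=d(\gamma_{0},\gamma_{1})$ for $\Xi$-a.e.\ $\gamma$, i.e.\ $\Xi$ is concentrated on $\G(M)$, and $(e_{0},e_{1})_{\sharp}\Xi$ is an optimal coupling. Finally, for $\Xi$-a.e.\ constant-speed geodesic $d(\gamma_{s},\gamma_{t})=|s-t|\,\mathsf{L}(\gamma)$, so the cost of $(e_{s},e_{t})_{\sharp}\Xi$ equals $(s-t)^{2}\int\mathsf{L}(\gamma)^{2}\,d\Xi=(s-t)^{2}d_{W}(\Gamma(0),\Gamma(1))^{2}=d_{W}(\Gamma(s),\Gamma(t))^{2}$, the minimum possible, hence this coupling too is optimal.

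The main obstacle is the tightness in the second paragraph. Because $M$ is only assumed complete and separable --- not locally compact --- Arzel\`a--Ascoli is not available directly (the piecewise-geodesic curves are not equi-Lipschitz), and compactness must be extracted from the uniform second-moment bound on the length through the modulus-of-continuity-in-probability estimate above, together with tightness of the initial marginal. The Borel selection of geodesics and the limiting arguments (lower semicontinuity of length, passing marginals to the limit) are routine but must be arranged with some care.
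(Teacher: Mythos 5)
The paper offers no proof of this lemma: it is stated, together with the remark that ``the following are well-known results in optimal transportation,'' and left to the references. So there is no paper argument to compare against; what you have written is essentially the standard lifting construction (it is the proof one finds, e.g., in Lisini's work or in Ambrosio--Gigli's lecture notes), and the overall strategy --- glue optimal couplings at dyadic times, interpolate by measurably selected geodesics, obtain a uniform $L^{2}$-bound on lengths, extract a tight limit, and identify it by lower semicontinuity of length plus the saturation of the Wasserstein triangle inequality --- is correct.

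Three points in the write-up are stated more loosely than they should be. First, the claim that ``such geodesics exist for $\pi_{j}$-a.e.\ pair'' is not automatic for an \emph{arbitrary} optimal coupling $\pi_{j}$ of $\Gamma(t_{j-1})$ and $\Gamma(t_{j})$; what the midpoint/gluing argument actually produces is \emph{some} optimal coupling $\pi_{j}'$ concentrated on pairs admitting a geodesic (obtained by iterating the gluing on $[t_{j-1},t_{j}]$ and exploiting equality in Minkowski's inequality to force $d(x,z)=d(z,y)=\tfrac12 d(x,y)$ a.e.). You should take $\pi_{j}:=\pi_{j}'$; with an arbitrary optimal coupling the Borel selection may have empty values on a set of positive measure. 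Second, tightness of $\{\Xi_{n}\}$ in $\mathcal{P}(C([0,1];M))$ does not follow from the modulus-of-continuity estimate together with tightness of the $e_{0}$-marginal alone: when $M$ is not locally compact, bounded sets are not precompact, so equicontinuity plus control at a single time is insufficient. What saves you is that $(e_{t})_{\sharp}\Xi_{n}=\Gamma(t)$ for every dyadic $t$ (for $n$ large), hence you have pointwise tightness at a \emph{dense} set of times; combining finite families of compacts at the nodes with the oscillation bound over finer and finer meshes yields a totally bounded, hence compact, subset of $C([0,1];M)$. Third, the final optimality step assumes $\Xi$-a.e.\ limit curve is \emph{constant speed}, which does not follow merely from $\mathsf{L}(\gamma)=d(\gamma_{0},\gamma_{1})$; it does follow by repeating the $L^{2}$-length estimate over dyadic subintervals $[s,t]$ and again using equality in Minkowski to get $\mathsf{L}(\gamma\llcorner_{[s,t]})=(t-s)\mathsf{L}(\gamma)$ a.e., then passing to all $s,t$ by density. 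None of these affects the architecture of the argument, and you were right to flag the tightness issue as the sensitive point; but as written these steps are not yet fully justified.
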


The curvature-dimension condition $\mathsf{CD}(K,N)$ is defined in terms of convexity properties of 
the lower semi-continuous R\'enyi entropy functional 
\begin{equation}\label{E:entropy}
\mathcal{S}_{N}(\mu | m) : = - \int_{M} \r^{-1/N}(x) \mu(dx)
\end{equation}
on $P_{2}(M,d)$ where $\r$ denotes the density of the absolutely continuous part $\mu^{c}$ in the Lebesgue decomposition 
$\mu = \mu^{c} + \mu^{s} = \r m + \mu^{s}$.

Given two numbers $K,N\in \erre$ with $N\geq1$, we put for $(t,\theta) \in[0,1] \times \erre_{+}$,
\begin{equation}\label{E:tau}
\tau_{K,N}^{(t)}(\theta):= 
\begin{cases}
\infty, & \textrm{if}\ K\theta^{2} \geq (N-1)\pi^{2}, \crcr
\displaystyle  t^{1/N}\Bigg(\frac{\sin(t\theta\sqrt{K/(N-1)})}{\sin(\theta\sqrt{K/(N-1)})}\Bigg)^{1-1/N} & \textrm{if}\ K\theta^{2} \leq (N-1)\pi^{2}, \crcr
t & \textrm{if}\ K \theta^{2}<0\ \textrm{or}\\& \textrm{if}\ K \theta^{2}=0\ \textrm{and}\ N=1,  \crcr
\displaystyle  t^{1/N}\Bigg(\frac{\sinh(t\theta\sqrt{-K/(N-1)})}{\sinh(\theta\sqrt{-K/(N-1)})}\Bigg)^{1-1/N} & \textrm{if}\ K\theta^{2} \leq 0 \ \textrm{and}\ N>1.
\end{cases}
\end{equation}

That is, $\tau_{K,N}^{(t)}(\theta): = t^{1/N} \sigma_{K,N-1}^{(t)}(\theta)^{(N-1)/N}$ where
\[
\sigma_{K,N}^{(t)}(\theta) = \frac{\sin(t\theta\sqrt{K/N})}{\sin(\theta\sqrt{K/N})}, 
\]
if $0 < K\theta^{2}<N\pi^{2}$ and with appropriate interpretation otherwise. Moreover we put 
\[
\varsigma_{K,N}^{(t)}(\theta): = \tau_{K,N}^{(t)}(\theta)^{N}.
\]
The coefficients $\tau_{K,N}^{(t)}(\theta),\sigma_{K,N}^{(t)}(\theta)$ and $\varsigma_{K,N}^{(t)}(\theta)$ are all volume distortion coefficients 
depending on the curvature $K$ and on the dimension $N$.

\begin{definition}[Curvature-Dimension condition]\label{D:CD}
Let two number $K,N \in \erre$ with $N\geq1$ be given. We say that $(M,d,m)$ satisfies the curvature-dimension condition 
- denoted  by $\mathsf{CD}(K,N)$ - if and only if for each pair 
$\nu_{0}, \nu_{1} \in \mathcal{P}_{2}(M,d,m)$ there exists an optimal coupling $\pi$ of $\nu_{0}=\r_{0}m$ and $\nu_{1}=\r_{1}m$,
and a geodesic $\Gamma:[0,1] \to \mathcal{P}_{2}(M,d,m)$ connecting $\nu_{0}$ and $\nu_{1}$ with 
\begin{equation}\label{E:CD}
\begin{aligned}
\mathcal{S}_{N'}(\Gamma(t)|m) \leq   - \int_{M\times M}& \Big[ \tau_{K,N'}^{(1-t)}(d(x_{0},x_{1}))\r_{0}^{-1/N'}(x_{0})  \crcr
&~ + \tau_{K,N'}^{(t)}(d(x_{0},x_{1}))\r_{1}^{-1/N'}(x_{1})  \Big]  \pi(dx_{0}dx_{1}),
\end{aligned}
\end{equation}
for all $t \in [0,1]$ and all $N'\geq N$.
\end{definition}

We recall also the definition of the reduced curvature-dimension condition $\mathsf{CD}^{*}(K,N)$ introduced in \cite{sturm:loc}
as well as the definition of $\mathsf{CD}_{loc}(K,N)$.

\begin{definition}[Reduced Curvature-Dimension condition]\label{D:CD*}
Let two number $K,N \in \erre$ with $N\geq1$ be given. We say that $(M,d,m)$ satisfies the reduced curvature-dimension condition 
- denoted  by $\mathsf{CD}^{*}(K,N)$ - if and only if for each pair 
$\nu_{0}, \nu_{1} \in \mathcal{P}_{2}(M,d,m)$ there exists an optimal coupling $\pi$ of $\nu_{0}=\r_{0}m$ and $\nu_{1}=\r_{1}m$,
and a geodesic $\Gamma:[0,1] \to \mathcal{P}_{2}(M,d,m)$ connecting $\nu_{0}$ and $\nu_{1}$ 
such that \eqref{E:CD} holds true for all $t \in [0,1]$ and all $N'\geq N$ 
with the coefficients $\tau_{K,N}^{(t)}(d(x_{0},x_{1}))$ and $\tau_{K,N}^{(1-t)}(d(x_{0},x_{1}))$
replaced by $\sigma_{K,N}^{(t)}(d(x_{0},x_{1}))$ and $\sigma_{K,N}^{(1-t)}(d(x_{0},x_{1}))$, respectively.
\end{definition}

\begin{definition}[Local Curvature-Dimension condition]\label{D:loc}
Let two number $K,N \in \erre$ with $N\geq1$ be given. We say that $(M,d,m)$ satisfies the curvature-dimension condition 
locally - denoted  by $\mathsf{CD}_{loc}(K,N)$ - if and only if each point $x \in M$ has a neighborhood $M(x)$ such that for each pair 
$\nu_{0}, \nu_{1} \in \mathcal{P}_{2}(M,d,m)$ supported in $M(x)$ there exists an optimal coupling $\pi$ of $\nu_{0}=\r_{0}m$ and $\nu_{1}=\r_{1}m$,
and a geodesic $\Gamma:[0,1] \to \mathcal{P}_{2}(M,d,m)$ connecting $\nu_{0}$ and $\nu_{1}$ with 
\begin{equation}\label{E:CDloc}
\begin{aligned}
\mathcal{S}_{N'}(\Gamma(t)|m) \leq   - \int_{M\times M}& \Big[ \tau_{K,N'}^{(1-t)}(d(x_{0},x_{1}))\r_{0}^{-1/N'}(x_{0})  \crcr
&~ + \tau_{K,N'}^{(t)}(d(x_{0},x_{1}))\r_{1}^{-1/N'}(x_{1})  \Big]  \pi(dx_{0}dx_{1}),
\end{aligned}
\end{equation}
for all $t \in [0,1]$ and all $N'\geq N$.
\end{definition}
Notice that the geodesic $\Gamma$ of the above definition can exit from the neighborhood $M(x)$.

As already emphasized in the introduction, in \cite{sturm:loc} it is proved that $\mathsf{CD}_{loc}(K,N)$ implies $\mathsf{CD}^{*}(K,N)$.

If a non-branching metric measure space $(M,d,m)$ satisfies $\mathsf{CD}(K,N)$ then the uniqueness of geodesics can be proven. The next result is taken 
from \cite{sturm:MGH2}.
\begin{lemma}\label{L:map}
Assume that $(M,d,m)$ is non-branching an satisfies $\mathsf{CD}(K,N)$ for some pair $(K,N)$. Then for every $x\in\supp[m]$ and $m$-a.e. $y\in M$ 
(with the exceptional set depending on x) there exists a unique geodesic between $x$ and $y$. 

Moreover, there exists a measurable map $\gamma: M^{2} \to \G(M)$ such that for $m\otimes m$-a.e. $(x,y) \in M^{2}$ the curve $t \mapsto \gamma_{t}(x,y)$
is the unique geodesic connecting $x$ and $y$.
\end{lemma}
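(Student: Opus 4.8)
The plan is to deduce the statement from the behaviour of optimal transport \emph{out of} the fixed point, using the non-branching hypothesis to force rigidity of the transporting geodesics and the $\mathsf{CD}(K,N)$ inequality to force absolute continuity of the intermediate measures. Fix $x\in\supp[m]$. For $r>0$ set $\nu_0^r:=m(B_r(x))^{-1}\,m\llcorner B_r(x)\in P_2(M,d,m)$, and fix an increasing sequence of bounded Borel sets $L_k\uparrow M$ with $0<m(L_k)<\infty$ and $\nu_1^k:=m(L_k)^{-1}\,m\llcorner L_k\in P_2(M,d,m)$. It suffices to produce, for each $k$, an $m$-null set $Z_k\subseteq L_k$ and a Borel map $L_k\setminus Z_k\ni y\mapsto\gamma^{x,k}(y)\in\G(M)$ whose value is \emph{the} geodesic from $x$ to $y$: by uniqueness these maps agree on overlaps and patch to a single map defined off the $m$-null set $\bigcup_k(Z_k\setminus L_{k-1})$, and carrying out the construction with measurable dependence on $x$ (using that optimal couplings depend measurably on their marginals) yields the map $\gamma\colon M^2\to\G(M)$.

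The first ingredient is that intermediate measures are absolutely continuous. From \eqref{E:CD} one obtains, as in \cite{sturm:MGH2}, the Brunn--Minkowski inequality: for Borel sets $A_0,A_1$ of positive finite measure and their set $A_t$ of $t$-intermediate points, $m(A_t)^{1/N}\ge\tau_{K,N}^{(1-t)}(\Theta)\,m(A_0)^{1/N}+\tau_{K,N}^{(t)}(\Theta)\,m(A_1)^{1/N}$, with $\Theta$ controlling the distances between $A_0$ and $A_1$; letting the radius of one of the two sets shrink to $0$ gives the corresponding one-sided estimate when it degenerates to a point. These bounds yield a uniform $L^\infty$-control on the densities of the time-$t$ slices of \emph{any} Wasserstein geodesic joining two boundedly supported measures, at least one of which has bounded density. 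In particular, if $\Gamma$ is a Wasserstein geodesic joining $\nu_0^r$ and $\nu_1^k$, or joining $\delta_x$ and $\nu_1^k$, then $\Gamma(t)\ll m$ for every $t\in(0,1)$.

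The second ingredient is rigidity of geodesics. The non-branching hypothesis implies that two geodesics coinciding on a non-degenerate subinterval of $[0,1]$ coincide on all of $[0,1]$ (iterate the singleton property over dyadic parameters, successively doubling the interval of coincidence). Combined with $d^2$-cyclical monotonicity this gives the ``no-crossing'' property: if $\Xi$ is a dynamical optimal plan furnished by Lemma \ref{L:geod} and $\gamma,\bar\gamma\in\supp\Xi$ satisfy $\gamma(t_0)=\bar\gamma(t_0)$ for some $t_0\in(0,1)$, then $\gamma=\bar\gamma$. Indeed, cyclical monotonicity of $(e_0,e_1)_\sharp\Xi$ applied to the pairs $(\gamma(0),\gamma(1))$ and $(\bar\gamma(0),\bar\gamma(1))$, together with the triangle inequality through the common point, forces the lengths of $\gamma$ and $\bar\gamma$ to coincide and the concatenation $\gamma|_{[0,t_0]}\ast\bar\gamma|_{[t_0,1]}$ to be a geodesic; being equal to $\gamma$ on $[0,t_0]$ it equals $\gamma$ everywhere, hence equals $\bar\gamma$ on $[t_0,1]$, hence $\gamma=\bar\gamma$. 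Thus $e_{t_0}$ is injective on $\supp\Xi$ for every $t_0\in(0,1)$.

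It remains to combine these. Applying $\mathsf{CD}(K,N)$ to $\nu_0^r,\nu_1^k$ and lifting (Lemma \ref{L:geod}) we obtain a dynamical optimal plan $\Xi^r$ with $e_{t\sharp}\Xi^r\ll m$ and $e_t|_{\supp\Xi^r}$ injective for $t\in(0,1)$. A standard bootstrap (restrictions and averages of optimal couplings are again optimal; an optimal coupling out of an absolutely continuous measure whose Wasserstein interpolation has absolutely continuous intermediate slices is unique and deterministic) then shows that the optimal coupling and the Wasserstein geodesic joining $\nu_0^r$ and $\nu_1^k$ are unique, and that the transporting geodesic from $y_0$ to its image is unique for $\nu_0^r$-a.e.\ $y_0$: were this to fail on a positive-measure set, choosing the transporting geodesics differently would produce two distinct dynamical optimal plans sharing the (unique) endpoint coupling, which is incompatible with the uniqueness --- just established --- of the Wasserstein geodesic and of the optimal coupling out of $\nu_0^r$. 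Finally one lets $r\downarrow0$: the $\Xi^r$ converge weakly, along a subsequence, to a dynamical optimal plan $\Xi^0$ between $\delta_x$ and $\nu_1^k$, hence concentrated on geodesics issuing from $x$; since $e_{t\sharp}\Xi^0\ll m$ and $e_t|_{\supp\Xi^0}$ is injective for $t\in(0,1)$, the same circle of ideas gives that for $\nu_1^k$-a.e.\ $y$ there is exactly one geodesic from $x$ to $y$ --- the one selected by $\Xi^0$ --- and that $y\mapsto\gamma^{x,k}(y)$ is Borel, completing the reduction. The main obstacle is precisely this combination: turning the cheap no-crossing consequence of non-branching into genuine uniqueness of the transport map, of the Wasserstein geodesic, and of the transporting geodesics requires exploiting absolute continuity of the intermediate measures (so that $e_t$ is injective on a \emph{full} set and restrictions of the plan stay absolutely continuous), which is where $\mathsf{CD}(K,N)$ --- rather than the metric structure alone --- enters.
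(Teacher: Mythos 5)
The paper does not actually prove this lemma: it cites it verbatim from Sturm, \emph{On the geometry of metric measure spaces II} (Lemma 4.1 there), with the sentence ``The next result is taken from \cite{sturm:MGH2}.'' So there is no internal proof to compare your write-up against; what I can do is assess your sketch on its own terms.

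Your two ingredients --- $\mathsf{CD}(K,N)$ forcing absolute continuity of intermediate marginals, and non-branching plus $d^2$-cyclical monotonicity forcing injectivity of $e_{t}$ on the support of a dynamical optimal plan for $t\in(0,1)$ --- are exactly the ones Sturm's argument turns on, and your derivation of the no-crossing property is correct (the step ``hence equals $\bar\gamma$ on $[t_0,1]$, hence $\gamma=\bar\gamma$'' implicitly invokes non-branching once more on the overlap $[t_0,1]$, which is fine). The real problem is that the step you yourself flag as ``the main obstacle'' is asserted rather than argued. You invoke a ``standard bootstrap'': that an optimal coupling out of an absolutely continuous measure whose interpolation has absolutely continuous slices is unique and deterministic, and that the Wasserstein geodesic is unique. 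But this is precisely the content of the lemma modulo routine measurable-selection bookkeeping; quoting it as known collapses the proof to a citation of an equivalent statement. Worse, the way you then deduce uniqueness of the transporting geodesic is circular: you argue that a second choice of geodesics ``would produce two distinct dynamical optimal plans sharing the (unique) endpoint coupling, which is incompatible with the uniqueness --- just established --- of the Wasserstein geodesic,'' but uniqueness of the Wasserstein geodesic (as a $\mathcal{P}_2$-curve) was itself part of the bootstrap you took for granted. Note also that $e_{t}$-injectivity on $\supp\Xi$ for $t\in(0,1)$ plus $e_{t\sharp}\Xi\ll m$ do \emph{not} by themselves yield uniqueness of $\Xi$: two plans $\Xi_1\ne\Xi_2$ may have distinct midpoint marginals $e_{1/2\sharp}\Xi_1\ne e_{1/2\sharp}\Xi_2$ and still both be consistent with injectivity of $e_{1/2}$ on $\supp\tfrac12(\Xi_1+\Xi_2)$. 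So some further idea (e.g.\ a Brunn--Minkowski-type density comparison that rules out two distinct midpoint flows carrying the same endpoint mass) is required to close the gap, and that idea is missing.

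Two subsidiary points. First, the assertion that the Brunn--Minkowski estimate controls the densities of \emph{any} Wasserstein geodesic is itself a non-trivial theorem whose proof already uses non-branching in an essential way (it is in the spirit of Theorem~30.19 in \cite{villa:Oldnew}); the Brunn--Minkowski inequality alone controls only the geodesic that $\mathsf{CD}(K,N)$ produces. You should either prove this or at least flag that it rests on non-branching, since using it without comment hides another piece of the same circle. Second, in the limit $r\downarrow 0$, the starting measure $\delta_x$ is no longer absolutely continuous, so the bootstrap as you stated it (``out of an absolutely continuous measure'') does not apply directly; you would have to rerun it from the $\nu_1^k$ end or from an intermediate a.c.\ slice $e_{t\sharp}\Xi^0$, and you would need to explain why that gives uniqueness of \emph{all} geodesics from $x$ to $y$ and not merely of those lying in $\supp\Xi^0$. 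As written, ``the same circle of ideas'' is doing all the work in that sentence.
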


In the setting of non-branching metric measure space $\mathsf{CD}(K,N)$ has an equivalent point-wise formulation:  $(M,d,m)$ satisfies 
$\mathsf{CD}(K,N)$ if and only if for each pair $\nu_{0},\nu_{1}\in \mathcal{P}_{2}(M,d,m)$ and each optimal coupling 
$\pi$ of them 
\begin{equation}\label{E:cdpunto}
\r_{t}(\gamma_{t}(x_{0},x_{1}))\leq \Big[ \tau_{K,N'}^{(1-t)}(d(x_{0},x_{1}))\r_{0}^{-1/N'}(x_{0}) + 
\tau_{K,N'}^{(t)}(d(x_{0},x_{1}))\r_{1}^{-1/N'}(x_{1})  \Big]^{-N}, 
\end{equation}
for all $t \in [0,1]$, and $\pi$-a.e. $(x_{0},x_{1}) \in M \times M$. Here $\r_{t}$ is the density of the push-forward of $\pi$ under 
the map $(x_{0},x_{1}) \mapsto \gamma_{t}(x_{0},x_{1})$.

We recall the definition of the measure contraction property. \\
A Markov kernel on $M$ is a map $Q: M\times \mathcal{B}(M) \to [0,1]$ 
(where $\mathcal{B}(M)$ denotes the Borel $\sigma$-algebra of M) with the following properties: 
\begin{itemize}
\item[(i)] for each $x \in M$ the map $Q(x, \cdot) : \mathcal{B}(M) \to [0,1]$ is a probability measure on $M$;
\item[(ii)] for each $A \in \mathcal{B}(M)$ the function $Q (\cdot, A) : M \to [0,1]$ is  $m$-measurable.
\end{itemize}

\begin{definition}[Measure contraction property]\label{D:mcp}
Let two number $K,N \in \erre$ with $N\geq1$ be given. We say that $(M,d,m)$ satisfies the \emph{measure contraction property} 
 $\mathsf{MCP}(K,N)$ if and only if  for each $0 < t < 1$ there exists a Markov kernel $Q_{t}$ from $M^{2}$ to $M$ 
 such that for $m^{2}$-a.e. $(x,y) \in M$ and for $Q_{t}(x,y; \cdot)$-a.e. $z$ the point $z$ is a $t$-intermediate point of $x$ and $y$,
 and such that for $m$-a.e. $x \in M$ and for every measurable $B \subset M$,
\begin{equation}\label{E:MCP}
\begin{aligned}
\int_{M} \varsigma^{(t)}_{K,N}(d(x,y))Q_{t}(x,y;B) m(dy)  \leq & ~ m(B), \crcr
\int_{M} \varsigma^{(1-t)}_{K,N}(d(x,y))Q_{t}(y,x;B) m(dy)  \leq & ~ m(B). 
\end{aligned}
\end{equation}
\end{definition}

\subsection{Disintegration of measures}
\label{Ss:disintegrazione}

Given a measurable space $(R, \mathscr{R})$ and a function $r: R \to S$, with $S$ generic set, we can endow $S$ with the \emph{push forward $\sigma$-algebra} $\mathscr{S}$ of $\mathscr{R}$:
\[
Q \in \mathscr{S} \quad \Longleftrightarrow \quad r^{-1}(Q) \in \mathscr{R},
\]
which could be also defined as the biggest $\sigma$-algebra on $S$ such that $r$ is measurable. Moreover given a measure space 
$(R,\mathscr{R},\rho)$, the \emph{push forward measure} $\eta$ is then defined as $\eta := (r_{\sharp}\rho)$.

Consider a probability space $(R, \mathscr{R},\rho)$ and its push forward measure space $(S,\mathscr{S},\eta)$ induced by a map $r$. From the above definition the map $r$ is clearly measurable and inverse measure preserving.

\begin{definition}
\label{defi:dis}
A \emph{disintegration} of $\rho$ \emph{consistent with} $r$ is a map $\rho: \mathscr{R} \times S \to [0,1]$ such that
\begin{enumerate}
\item  $\rho_{s}(\cdot)$ is a probability measure on $(R,\mathscr{R})$ for all $s\in S$,
\item  $\rho_{\cdot}(B)$ is $\eta$-measurable for all $B \in \mathscr{R}$,
\end{enumerate}
and satisfies for all $B \in \mathscr{R}, C \in \mathscr{S}$ the consistency condition
\[
\rho\left(B \cap r^{-1}(C) \right) = \int_{C} \rho_{s}(B) \eta(ds).
\]
A disintegration is \emph{strongly consistent with respect to $r$} if for all $s$ we have $\rho_{s}(r^{-1}(s))=1$.
\end{definition}

The measures $\rho_s$ are called \emph{conditional probabilities}.

We say that a $\sigma$-algebra $\mathcal{H}$ is \emph{essentially countably generated} with respect to a measure $m$ if there exists a countably generated $\sigma$-algebra $\hat{\mathcal{H}}$ such that for all $A \in \mathcal{H}$ there exists $\hat{A} \in \hat{\mathcal{H}}$ such that $m (A \vartriangle \hat{A})=0$.

We recall the following version of the disintegration theorem. See \cite{biacar:cmono} for a direct proof.

\begin{theorem}[Disintegration of measures]
\label{T:disintr}
Assume that $(R,\mathscr{R},\rho)$ is a countably generated probability space, $R = \cup_{s \in S}R_{s}$ a partition of R, $r: R \to S$ the quotient map and $\left( S, \mathscr{S},\eta \right)$ the quotient measure space. Then  $\mathscr{S}$ is essentially countably generated w.r.t. $\eta$ and there exists a unique disintegration $s \mapsto \rho_{s}$ in the following sense: if $\rho_{1}, \rho_{2}$ are two consistent disintegration then $\rho_{1,s}(\cdot)=\rho_{2,s}(\cdot)$ for $\eta$-a.e. $s$.

If $\left\{ S_{n}\right\}_{n\in \enne}$ is a family essentially generating  $\mathscr{S}$ define the equivalence relation:
\[
s \sim s' \iff \   \{  s \in S_{n} \iff s'\in S_{n}, \ \forall\, n \in \enne\}.
\]
Denoting with p the quotient map associated to the above equivalence relation and with $(L,\mathscr{L}, \lambda)$ the quotient measure space, the following properties hold:
\begin{itemize}
\item $R_{l}:= \cup_{s\in p^{-1}(l)}R_{s} = (p \circ r)^{-1}(l)$ is $\rho$-measurable and $R = \cup_{l\in L}R_{l}$;
\item the disintegration $\rho = \int_{L}\rho_{l} \lambda(dl)$ satisfies $\rho_{l}(R_{l})=1$, for $\lambda$-a.e. $l$. In particular there exists a 
strongly consistent disintegration w.r.t. $p \circ r$;
\item the disintegration $\rho = \int_{S}\rho_{s} \eta(ds)$ satisfies $\rho_{s}= \rho_{p(s)}$ for $\eta$-a.e. $s$.
\end{itemize}
\end{theorem}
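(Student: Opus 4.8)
The plan is to reduce the statement to the classical disintegration theorem for Borel probability measures on Polish spaces, using the countable generators of $\mathscr{R}$ to realize $(R,\mathscr{R},\rho)$ inside the Cantor space. Fix a countable algebra $\{B_{n}\}_{n\in\enne}$ generating $\mathscr{R}$ and put $\iota\colon R\to\mathcal{C}:=\{0,1\}^{\enne}$, $\iota(x):=(\ind_{B_{n}}(x))_{n}$; then $\iota$ is measurable, $\iota^{-1}(\mathcal{B}(\mathcal{C}))=\mathscr{R}$, and $\iota$ induces an isomorphism between $\mathcal{B}(\mathcal{C})$ modulo $\tilde\rho$-null sets ($\tilde\rho:=\iota_{\sharp}\rho$) and $\mathscr{R}$ modulo $\rho$-null sets --- this is the only point where ``countably generated'' is used. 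First I would check that $\mathscr{S}$ is essentially countably generated: the $r$-saturated sets $\mathscr{R}_{r}:=\{r^{-1}(C):C\in\mathscr{S}\}$ form a sub-$\sigma$-algebra of $\mathscr{R}$, and $C\mapsto r^{-1}(C)$ together with $\eta=r_{\sharp}\rho$ makes $\mathscr{R}_{r}$ isomorphic as a measure algebra to $(S,\mathscr{S},\eta)$; since $\mathscr{R}$ is countably generated $L^{1}(R,\mathscr{R},\rho)$ is separable, hence so is its closed subspace $L^{1}(R,\mathscr{R}_{r},\rho)$, and a probability space with separable $L^{1}$ has an essentially countably generated $\sigma$-algebra, whence $\mathscr{R}_{r}$, and therefore $\mathscr{S}$, is essentially countably generated. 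Then I would fix a countable family $\{S_{n}\}$ essentially generating $\mathscr{S}$ and form $\sim$ and the quotient $p\colon S\to L$ as in the statement; each $S_{n}$ is $\sim$-saturated, say $S_{n}=p^{-1}(\bar S_{n})$, the assignment $l\mapsto(\ind_{\bar S_{n}}(l))_{n}$ identifies $L$ with a subset of $\mathcal{C}$, and $f:=p\circ r\colon R\to L\subseteq\mathcal{C}$ is $\mathscr{R}$-measurable because $f^{-1}(\{c_{n}=1\})=r^{-1}(S_{n})\in\mathscr{R}$, with fibers exactly the sets $R_{l}$. In particular $R_{l}=(p\circ r)^{-1}(l)\in\mathscr{R}$ and $R=\bigcup_{l\in L}R_{l}$, which is the first bullet of the second part.

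For the existence of a disintegration strongly consistent with $p\circ r$, I would use the isomorphism theorem for separable measure algebras, together with the embedding above, to realize $(R,\mathscr{R},\rho)$ modulo null sets as $(R',\mathcal{B}(R'),\tilde\rho)$ for a Borel set $R'\subseteq\mathcal{C}$ with $\tilde\rho(R')=1$, carrying $f$ to a Borel map $\bar f\colon R'\to\mathcal{C}$ and $f_{\sharp}\rho$ to $\lambda:=\bar f_{\sharp}\tilde\rho=p_{\sharp}\eta$, a Borel probability concentrated on $f(R)$. Applying the classical disintegration theorem on the standard Borel space $R'$ along the Borel map $\bar f$ yields a $\lambda$-a.e.\ defined measurable family $\{\mu_{l}\}$ of probabilities with $\mu_{l}$ concentrated on $\bar f^{-1}(l)$ and $\tilde\rho=\int\mu_{l}\,\lambda(dl)$; transporting back produces probabilities $\rho_{l}$ on $R$ with $\rho_{l}(R_{l})=1$ and $\rho=\int_{L}\rho_{l}\,\lambda(dl)$, the second bullet. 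An essentially equivalent route, closer to the ``direct proof'' in \cite{biacar:cmono}, bypasses the Polish realization: with $A_{n}(x)$ the atom of $r^{-1}(\sigma(S_{1},\dots,S_{n}))$ through $x$, the functions $f^{B}_{n}(x):=\rho(B\cap A_{n}(x))/\rho(A_{n}(x))$ form a bounded martingale in $n$ for each $B\in\mathscr{R}$, converging $\rho$-a.e., and running $B$ over $\{B_{n}\}$ the limits define, off one $\rho$-null set, a finitely additive set function of total mass one on the algebra $\{B_{n}\}$, which a compactness argument upgrades to a countably additive measure concentrated on $R_{l}$.

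Passing to the $\eta$-disintegration, I would set $\rho_{s}:=\rho_{p(s)}$: it is a probability for each $s$; $s\mapsto\rho_{s}(B)$ is $\eta$-measurable since $l\mapsto\rho_{l}(B)$ is $\lambda$-measurable, $p$ is measurable and $\lambda=p_{\sharp}\eta$; and the consistency relation $\rho(B\cap r^{-1}(C))=\int_{C}\rho_{s}(B)\,\eta(ds)$ holds for $C\in\sigma(\{S_{n}\})$ --- such $C$ being $\sim$-saturated, $r^{-1}(C)$ is a union of fibers $R_{l}$ and the relation follows from $\rho=\int_{L}\rho_{l}\,\lambda(dl)$ with $\rho_{l}(R_{l})=1$ --- and then for arbitrary $C\in\mathscr{S}$ by picking $\hat C\in\sigma(\{S_{n}\})$ with $\eta(C\vartriangle\hat C)=0$, since then $\rho(r^{-1}(C\vartriangle\hat C))=\eta(C\vartriangle\hat C)=0$. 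So $s\mapsto\rho_{s}$ is a disintegration consistent with $r$ and satisfies $\rho_{s}=\rho_{p(s)}$ $\eta$-a.e. Uniqueness is the usual argument: if $\rho^{1}_{\cdot},\rho^{2}_{\cdot}$ are consistent disintegrations, then for fixed $B\in\mathscr{R}$ the functions $\rho^{1}_{\cdot}(B),\rho^{2}_{\cdot}(B)$ have equal integral over every $C\in\mathscr{S}$, hence agree $\eta$-a.e.; letting $B$ range over $\{B_{n}\}$ gives one $\eta$-null set off which $\rho^{1}_{s}$ and $\rho^{2}_{s}$ coincide on $\{B_{n}\}$, hence on all of $\mathscr{R}$ by a monotone class argument.

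I expect the main obstacle to be the existence of the strongly consistent disintegration, specifically the passage between the abstract space $(R,\mathscr{R},\rho)$ --- which need not be standard Borel --- and a genuine standard Borel (or compact metric) model \emph{compatibly with the quotient map $p\circ r$}, so that the conditional probabilities given by the classical theorem really live on $R$ and are concentrated on the measurable fibers $R_{l}$; equivalently, the obstacle is to upgrade a finitely additive conditional set function to a countably additive measure carried by the correct fiber and depending measurably on $l$. This measurability bookkeeping is precisely what is carried out in detail in \cite{biacar:cmono}.
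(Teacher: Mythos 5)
The paper does not actually prove Theorem~\ref{T:disintr}: it is stated as a recalled result, with the reader referred to \cite{biacar:cmono} for a direct proof. So there is no ``paper's own proof'' to compare against, and your argument must be judged on its own merits. On those merits it is sound: the embedding $\iota\colon R\to\{0,1\}^{\enne}$ via the indicators of a countable generating family, the $L^{1}$-separability argument for essential countable generation of $\mathscr{S}$, the construction of $p\circ r$ and the identification of the fibers $R_{l}$, and the reduction to the classical disintegration theorem on a standard Borel model are all standard and correct. The one place that deserves an explicit word is the ``transporting back'' step: after disintegrating $\tilde\rho=\iota_{\sharp}\rho$ on $\mathcal{C}$ into $\int\mu_{l}\,\lambda(dl)$, the pullback $\rho_{l}(B):=\mu_{l}(A)$ for $A$ with $\iota^{-1}(A)=B$ is well-defined only after one fixes a single Borel set $N\subset\mathcal{C}$ of $\tilde\rho$-measure zero containing $\mathcal{C}\setminus\iota(R)$ (possible because $\iota(R)$ has full outer $\tilde\rho$-measure) and discards the $\lambda$-null set of $l$ with $\mu_{l}(N)>0$; off that set all the required identifications, including $\rho_{l}(R_{l})=\mu_{l}(\bar f^{-1}(l))=1$, go through. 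You correctly flag this bookkeeping as the crux. Your secondary sketch --- conditioning on the increasing finite algebras $\sigma(S_{1},\dots,S_{n})$, martingale convergence, and a compactness argument to pass from finitely to countably additive conditional measures --- is closer in spirit to the ``direct proof'' of \cite{biacar:cmono} that the paper cites, so if you wish to mirror the cited reference that is the route to flesh out; either way, the uniqueness and the passage from the $\lambda$-disintegration to the $\eta$-disintegration via $\rho_{s}:=\rho_{p(s)}$ are handled correctly.
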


In particular we will use the following corollary.

\begin{corollary}
\label{C:disintegration}
If $(S,\mathscr{S})=(X,\mathcal{B}(X))$ with $X$ Polish space, then the disintegration is strongly consistent.
\end{corollary}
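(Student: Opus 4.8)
The plan is to reduce the statement to the second half of Theorem~\ref{T:disintr}, choosing the essentially generating family $\{S_{n}\}_{n\in\enne}$ of $\mathscr{S}$ so that the associated equivalence relation $\sim$ collapses to equality. To this end I would first invoke a standard descriptive set theory fact: since $X$ is Polish it is second countable, so $\mathcal{B}(X)$ is countably generated, and moreover it admits a countable generating family that \emph{separates points}. Concretely, fixing a countable dense set $\{x_{k}\}_{k\in\enne}\subset X$, the family
\[
\{S_{n}\}_{n\in\enne} \;=\; \bigl\{\, B_{q}(x_{k}) \;:\; k\in\enne,\ q\in\Q,\ q>0 \,\bigr\}
\]
generates $\mathcal{B}(X)$ (every open set is a countable union of such balls) and separates points: if $s\neq s'$, pick $x_{k}$ with $d(x_{k},s)<d(s,s')/2$ and a rational $q$ with $d(x_{k},s)<q<d(s,s')/2$, so that $s\in B_{q}(x_{k})$ while $s'\notin B_{q}(x_{k})$. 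In particular this family essentially generates $\mathscr{S}=\mathcal{B}(X)$.

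Next I would apply the second part of Theorem~\ref{T:disintr} with this choice of $\{S_{n}\}_{n\in\enne}$. Because the family separates points, the equivalence relation
\[
s\sim s' \iff \bigl\{\, s\in S_{n}\iff s'\in S_{n},\ \forall\, n\in\enne \,\bigr\}
\]
has only singletons as classes; hence, up to the canonical identification of $L$ with $S$, the quotient map $p$ is the identity, $(L,\mathscr{L},\lambda)=(S,\mathscr{S},\eta)$, and $R_{l}=(p\circ r)^{-1}(l)=r^{-1}(s)$ for the point $s$ corresponding to $l$. Theorem~\ref{T:disintr} then guarantees that $R_{l}=r^{-1}(s)$ is $\rho$-measurable and that the unique disintegration $\rho=\int_{S}\rho_{s}\,\eta(ds)$ satisfies $\rho_{s}\bigl(r^{-1}(s)\bigr)=1$ for $\eta$-a.e. $s$ (combining the second and third bullets of the theorem, which in this case read $\rho_{s}=\rho_{p(s)}$ and $\rho_{p(s)}(R_{p(s)})=1$). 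By Definition~\ref{defi:dis} this is precisely strong consistency of the disintegration with respect to $r$, after modifying $\rho_{s}$ on the $\eta$-negligible exceptional set if one insists on the pointwise formulation.

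I do not expect a genuine obstacle here: every step is a formal consequence of Theorem~\ref{T:disintr} once the separating generating family is produced. The only points deserving minor care are the verification that the family of rational balls simultaneously generates $\mathcal{B}(X)$ and separates points, and the bookkeeping that identifies the quotient of $S$ by the trivial equivalence relation with $(S,\mathscr{S},\eta)$ itself, which is what upgrades the strong consistency with respect to $p\circ r$ furnished by the theorem to strong consistency with respect to $r$.
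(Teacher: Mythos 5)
The paper does not include a proof of this corollary (it is stated without argument, deferring to the cited reference for the disintegration theorem), so there is no in-text proof to compare against. Your argument is correct and is the standard route: a Polish space admits a countable point-separating generating family for its Borel $\sigma$-algebra (rational balls around a countable dense sequence), which forces the equivalence relation $\sim$ of Theorem~\ref{T:disintr} to coincide with equality, so $p$ is the identity, $(L,\mathscr{L},\lambda)=(S,\mathscr{S},\eta)$, and the strong consistency w.r.t.\ $p\circ r$ furnished by the theorem becomes strong consistency w.r.t.\ $r$ itself, modulo the usual modification on an $\eta$-null set that you correctly flag.
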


\section{Polar coordinates}\label{S:polar}
From now on we will assume $(M,d,m)$ to be a non-branching metric measure space satisfying $\mathsf{CD}_{loc}(K,N)$ for some $K,N \in \erre$ and 
$N \geq 1$. Since we want to prove that $(M,d,m)$ satisfies $\mathsf{MCP}(K,N)$ we also fix once forever $o \in M$.  

Decompose $M = \cup_{r\geq 0}M_{r}$ with $M_{r} : = \partial B_{r}(o)$ and, accordingly to this decomposition, 
$m$ can be disintegrated in the following way
\[
m = \int \bar m_{r} q(dr), \qquad   q(A) = m (\{ x : d(x,0) \in A\}).
\]
It is fairly easy to prove that the disintegration is strongly consistent. 
Indeed restrict $m$ to $B_{R}(o)$, with $R>0$, and consider any constant speed geodesic $\gamma$ going from $o$ to $M_{R}$
and take $[0, R]$ as the quotient set. 
It follows that the quotient space is a Polish space and then by Corollary \ref{C:disintegration} the disintegration is strongly consistent.
Letting $R \nearrow +\infty$, we obtain the strong consistency for the whole measure and 
$q$ will be a locally finite measure, therefore:

\[
\bar m_{r}(\{ x : d(x,o)= r\}) =1, \qquad \textrm{for }  q-a.e.\ r \in [0,\bar R].
\]

\begin{proposition}\label{P:retta}
The quotient measure $q \ll \mathcal{L}^{1} $.
\end{proposition}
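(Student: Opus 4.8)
The plan is to show that $q$ has no atoms and is moreover absolutely continuous with respect to Lebesgue measure on $[0,\infty)$, by exploiting the measure contraction estimate implicit in $\mathsf{CD}_{loc}(K,N)$ applied to transports along radial geodesics emanating from $o$. First I would fix $R>0$ and work on $B_R(o)$, where $m$ restricted to this ball is finite; it suffices to prove $q\llcorner_{[0,R]}\ll\mathcal L^1$ for every $R$, and then let $R\nearrow\infty$. The key geometric input is that moving a small annular shell $M_r$-region radially toward $o$ contracts the measure in a controlled way: if one transports (a normalized restriction of) $m$ supported near radius $r_1$ to (a normalized restriction of) $m$ supported near radius $r_0<r_1$ along the unique geodesics through $o$ (using Lemma \ref{L:map} and non-branching to get a well-defined transport map), then $\mathsf{CD}_{loc}(K,N)$ — or rather the point-wise consequence \eqref{E:cdpunto}, valid locally — forces the intermediate densities to satisfy an upper bound of the form $\r_t \le C(K,N,R)\,\r_1/t^{\,\cdots}$ times a $\tau$-coefficient, hence the transported mass of a set of radii $[a,b]\subset[0,R]$ cannot be smaller than a fixed positive multiple (depending only on $K,N,R$ and the ratio of radii) of the original mass.

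Concretely, the main step is: for $0<r_0<r_1\le R$ with $r_0/r_1 = t$, the radial transport map $T_t$ pushing $M_{r_1}$ onto $M_{r_0}$ satisfies, for every Borel $A\subset[r_0,r_1]$ whose preimage stays in a single local neighborhood $M(x)$ (then patch with a covering/chaining argument),
\[
q\bigl(T_t(A)\bigr)\ \ge\ c(K,N,R)\,\Bigl(\tfrac{r_0}{r_1}\Bigr)^{N}\, q(A),
\]
since the distortion coefficients $\tau_{K,N}^{(t)}(\theta)$ are bounded below by $t\cdot(\text{const depending on }K,N,R)$ uniformly for $\theta\le 2R$ in the relevant regime. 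Given such a contraction estimate, suppose for contradiction that $q$ is not absolutely continuous: then there is a Borel set $E\subset(0,R]$ with $\mathcal L^1(E)=0$ but $q(E)>0$. One can find $r_1$ with $r_1\in$ the closure of $E$ (or work with the atomic part directly: if $q(\{r_1\})>0$, contract $\{r_1\}$ onto $\{r_0\}$ for a range of $r_0$, producing uncountably many radii each carrying positive mass, contradicting $\sigma$-finiteness of $m$ on $B_R(o)$). For the purely singular-but-nonatomic case, I would instead scale the single bad shell: contracting $E$ toward $o$ by all factors $t\in(t_0,1)$ yields sets $T_t(E)$ of radii, each of $\mathcal L^1$-measure zero (images of a null set under a bi-Lipschitz-on-radii map — here I use that radial contraction multiplies distances by $t$), each of $q$-measure $\ge c\,t^N q(E)$, and these sit inside disjoint dyadic annuli for a suitable sequence $t_k\to 1$, forcing $m(B_R(o))=\infty$.

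The step I expect to be the main obstacle is making the radial transport rigorous and global: the coupling between shells at different radii need not be supported in a single local neighborhood $M(x)$, so I cannot invoke $\mathsf{CD}_{loc}$ in one shot. I would handle this by a chaining argument — subdivide the radial segment $[r_0,r_1]$ into finitely many pieces each short enough to lie (after the transport) inside one $M(x)$ from the cover, apply the local pointwise bound \eqref{E:cdpunto} on each piece, and compose the resulting contraction factors; because the coefficients $\tau_{K,N}^{(t)}$ are multiplicative-in-time up to controlled errors (and bounded away from $0$ on bounded $\theta$), the composed estimate retains the needed uniform lower bound $c(K,N,R)>0$. A secondary technical point is measurability of the radial transport map and of the shells $M_r$, which is supplied by Lemma \ref{L:map}, Lemma \ref{L:geod} and the disintegration theorem (Theorem \ref{T:disintr}, Corollary \ref{C:disintegration}) already established above; and the fact that the density $\r$ of $m$ enters \eqref{E:cdpunto} is harmless because only the ratio of masses of radial sets — equivalently, $q$-masses — is being compared.
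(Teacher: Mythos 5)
Your approach is genuinely different from the paper's and, as sketched, it has a gap. The paper's proof of Proposition~\ref{P:retta} is essentially a two-line citation argument: since $\mathsf{CD}_{loc}(K,N)$ implies $\mathsf{CD}^{*}(K,N)$ by \cite{sturm:loc}, one may invoke Theorem~2.3 of \cite{sturm:MGH2}, which shows that the map $r\mapsto v(r)=m(\bar B_r(o))$ is locally Lipschitz with the surface measure $s(r)$ as its weak derivative; absolute continuity of $q$ (with density $s$) is an immediate consequence. The chaining, the disintegration machinery, and everything else you invoke are not needed at this stage, because the needed Lipschitz bound was already established in the cited reference. Your proposal instead attempts a from-scratch measure-contraction argument, which is in the right spirit (contraction along radial geodesics is indeed what underlies the Lipschitz regularity in \cite{sturm:MGH2}), but two of its steps do not actually close.

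First, the purely singular non-atomic case is not handled correctly. If $E\subset(a,b]$ is $\mathcal L^1$-null with $q(E)>0$, the sets $t_kE$ for a sequence $t_k\to 1$ do \emph{not} ``sit inside disjoint dyadic annuli'': all of them cluster in a neighborhood of $(a,b]$ and heavily overlap. Going instead to $t_k\to 0$ yields disjointness but the contraction factor $c\,t_k^N$ makes the masses summable, so no contradiction with $m(B_R(o))<\infty$ arises. Your argument works for atoms (uncountably many atoms of mass $\ge\delta>0$ contradicts local finiteness), but atoms are not the generic obstruction. Second, and more structurally, the direction of your inequality deserves scrutiny: the pointwise form \eqref{E:cdpunto} controls the intermediate-time density from \emph{above}, which by itself gives a lower bound on the mass of the \emph{contracted} shell, not the kind of two-sided estimate needed to force $q\ll\mathcal L^1$. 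To deduce Lipschitz regularity of $v(r)$ (i.e., an \emph{upper} bound $v(r+\delta)-v(r)\le C\delta$) one has to set the transport up the other way around and use it to cap the mass of thin annuli; this is precisely the content of Theorem~2.3 of \cite{sturm:MGH2}, which the paper cites instead of re-proving. Your chaining strategy for the locality issue is plausible but left at the level of an announcement; it is not needed once one passes through $\mathsf{CD}^{*}(K,N)$, which is already a global condition.
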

\begin{proof}
Since $(M,d,m)$ satisfies $\mathsf{CD}_{\loc}(K,N)$, from \cite{sturm:loc} $(M,d,m)$ verifies $\mathsf{CD}^{*}(K,N)$, then defining 
\[
v(r):= m(\bar B_{r} (o), \qquad s(r) : = \limsup_{\delta \to 0} \frac{1}{\delta}  m (\bar B_{r+\delta}(o)\setminus B_{r}(o) ),
\]
the map $r \mapsto  v(r)$ is locally Lipschitz with $s$ as weak derivative, Theorem 2.3. of \cite{sturm:MGH2}. 
Being $s$ the density of $q$ w.r.t. $\mathcal{L}^{1}$, it follows that $q\ll \mathcal{L}^{1}$.
\end{proof}

With a slight abuse of notation $q (dr) = q(r)  \mathcal{L}^{1}$. Let $m_{r} : = q(r) \bar m_{r}$
so we have 
\[
m = \int  m_{r} dr. 
\]

Let $s_{r}:=m_{r}(M) = m_{r}(M_{r}) = \frac{d^{+}}{dr} m(B_{r}(o))$ and note that reduced Bishop-Gromov inequality, see \cite{sturm:loc}, 
implies that for all $0< r \leq R\leq \pi \sqrt{(N-1)/K^{*}}$
\begin{equation}\label{E:surfacevo}
\frac{s_{r}}{s_{R}} \geq\bigg( \frac{\sin(r\sqrt{K^{*}/(N-1)})}{\sin(R\sqrt{K^{*}/(N-1)}} \bigg)^{N},
\end{equation}
where $K^{*} = K(N-1)/N$.

Fix $R>0$ with $s_{R}>0$ and let $(p_{r})_{r\in [0,R]}$ denote the 
geodesic in $\mathcal{P}(M)$ connecting the probability measures $p_{0}  = \delta_{x_{0}}$ and 
$p_{R} = \frac{1}{s_{R}}m_{R}$. Note that for each $r$ the measure $p_{r}$ is supported on $M_{r}$. 
The next lemma follows straightforwardly from \eqref{E:surfacevo}.

\begin{lemma}\label{L:surfacevo}
The measure $p_{r}$ is absolute continuous with respect to the surface measure $m_{r}$.
\end{lemma}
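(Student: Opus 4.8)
The plan is to exploit the fact that $(p_r)_{r\in[0,R]}$ is a geodesic in $\mathcal{P}(M)$ connecting $\delta_{x_0}$ (which sits at $o$, so I should really write $p_0 = \delta_o$) to the normalized surface measure $\tfrac{1}{s_R}m_R$, and to run the reduced curvature-dimension inequality along this geodesic restricted to suitable small pieces. First I would fix $r\in(0,R]$ and, rescaling the parameter interval to $[0,1]$, regard $t\mapsto p_{tr}$ as a geodesic from $p_0=\delta_o$ to $p_r$. The endpoint $p_r$ need not a priori be $m_r$-absolutely continuous, so the direct application of $\mathsf{CD}^*(K,N)$ to the pair $(\delta_o, p_r)$ is not literally available — but absolute continuity with respect to $m$ of all intermediate points $p_t$, $t<1$, is exactly what $\mathsf{CD}^*(K,N)$ (together with the good behavior of geodesics in a non-branching space, Lemma~\ref{L:map}) gives us when one endpoint is a Dirac mass and the other is $m$-absolutely continuous; and one can then pass to the limit $t\to 1$ or approximate $p_r$ from inside. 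Concretely I would first establish that each $p_s$, $s<R$, is $m$-absolutely continuous by applying $\mathsf{CD}^*(K,N)$ to the pair $(\delta_o,\, \tfrac{1}{s_R}m_R)$ — recall the pointwise entropy/density bound \eqref{E:cdpunto} holds in the non-branching reduced setting — which produces a density $\r_s$ for $p_s$ with respect to $m$, supported on $M_s=\partial B_s(o)$.

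The second step is to descend from $m$-absolute continuity to $m_s$-absolute continuity using the disintegration $m=\int m_r\,dr$ set up just above. Since $p_s$ is concentrated on the single fiber $M_s$ and $m=\int m_r\,dr$ with $m_r$ concentrated on $M_r$, the measure $p_s$ lives on an $m$-null set unless we disintegrate correctly; the point is that $p_s \ll m$ in the sense that $p_s(A)=0$ whenever $m(A)=0$ is \emph{not} quite what \eqref{E:cdpunto} gives — rather, the density $\r_s$ is the $m_s$-density up to the coarea/disintegration identification. I would make this precise by noting that the push-forward of the transport geodesic under $z\mapsto d(z,o)$ is the deterministic map $t\mapsto tr$ (distances to $o$ interpolate linearly along geodesics emanating from $o$ in a non-branching space), so the relevant disintegration of $p_s$ consistent with $z\mapsto d(z,o)$ is trivial — it is supported on the single value $s$ — and comparing the disintegration of $m$ against the fiber $M_s$ shows $p_s \ll m_s$ directly, with a bounded density inherited from \eqref{E:surfacevo} and \eqref{E:cdpunto}.

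Finally, to reach $r=R$ itself (the endpoint), I would use \eqref{E:surfacevo}: the surface-volume ratio $s_r/s_R$ is bounded below by a positive constant for $r$ in a left-neighborhood of $R$, so $m_r$ does not degenerate as $r\nearrow R$, and one can represent $p_R = \lim_{r\nearrow R} p_r$ with uniform control on densities against $m_R$ (transporting back along the geodesics from $M_r$ to $M_R$, which is measure-comparable by $\mathsf{MCP}$-type estimates already available from $\mathsf{CD}^*(K,N)$), yielding $p_R \ll m_R$. The main obstacle I anticipate is precisely this passage between the two measures $m$ and $m_r$ along a null fiber: one has to argue carefully that the density produced by the curvature-dimension inequality with respect to the full measure $m$ restricts/disintegrates to an honest density with respect to the surface measure $m_r$, rather than merely concluding $p_r\ll m$ (which is vacuous on the fiber). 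Everything else — the absolute continuity of intermediate points, the linear interpolation of distances to $o$, the uniform density bounds — is either standard in the non-branching $\mathsf{CD}^*$ theory or an immediate consequence of the reduced Bishop--Gromov inequality \eqref{E:surfacevo}.
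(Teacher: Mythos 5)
Your proposal contains a genuine error, and the correction you gesture at is not carried out. The central problem is the claim in your first step that ``$\mathsf{CD}^*(K,N)$ \dots produces a density $\rho_s$ for $p_s$ with respect to $m$.'' This is false: $p_s$ is concentrated on the sphere $M_s = \partial B_s(o)$, and Proposition~\ref{P:retta} shows $q\ll\mathcal{L}^1$, hence $m(M_s) = q(\{s\}) = 0$. So $p_s$ is \emph{mutually singular} with $m$, not absolutely continuous. Moreover you cannot invoke $\mathsf{CD}^*(K,N)$ (or the pointwise bound \eqref{E:cdpunto}) for the pair $(\delta_o,\tfrac1{s_R}m_R)$ at all: these conditions are formulated only for endpoints in $\mathcal{P}_2(M,d,m)$, and $\tfrac1{s_R}m_R$ is not $m$-absolutely continuous. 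Your second paragraph notices the difficulty but the repair (``comparing the disintegration of $m$ against the fiber $M_s$ shows $p_s\ll m_s$ directly'') is an assertion, not an argument --- the object you claim to be disintegrating, namely the $m$-density of $p_s$, does not exist. Your third step is also misdirected: there is nothing to prove at $r=R$, since $p_R = \tfrac1{s_R}m_R$ is trivially absolutely continuous with respect to $m_R$.

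The paper's (very terse) justification is that the lemma follows from the surface Bishop--Gromov estimate \eqref{E:surfacevo}, and the mechanism is different from what you propose. The relevant input is a \emph{sector} version of \eqref{E:surfacevo}: given a Borel set $N\subset M_r$, let $H:=e_{r/R}^{-1}(N)\subset G$ and $N_R:=e_1(H)\subset M_R$ be the set of endpoints on $M_R$ of the geodesics of $\nu$ passing through $N$. Then $p_r(N)=\nu(H)=\tfrac1{s_R}\,m_R(N_R)$, while the reduced Bishop--Gromov inequality (applied along this sector rather than to the full sphere) gives
\[
m_r(N)\ \geq\ \Bigg(\frac{\sin\big(r\sqrt{K^*/(N-1)}\big)}{\sin\big(R\sqrt{K^*/(N-1)}\big)}\Bigg)^{N} m_R(N_R).
\]
Consequently $m_r(N)=0$ forces $m_R(N_R)=0$ and hence $p_r(N)=0$, which is exactly absolute continuity of $p_r$ with respect to $m_r$. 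Note that this argument goes in the direction opposite to the one your proposal aims for: one bounds the surface measure of the sector at radius $r$ \emph{from below} by the one at radius $R$, which is then controlled by $p_r$, rather than trying to produce a density of $p_r$ against $m$. If you wish to salvage your disintegration idea, the correct variant would be to take the geodesic $\tilde\mu_t$ from $\delta_o$ to the normalized restriction $m\llcorner_{\bar B_R(o)}/m(\bar B_R(o))$ --- which \emph{is} $m$-absolutely continuous, so $\tilde\mu_t\ll m$ for $t>0$ --- and then disintegrate both $\tilde\mu_t$ and the lift $\Xi$ by the radius map, matching the conditionals of $\tilde\mu_t$ on $M_u$ with the measures $e_{u/s\,\sharp}\Xi_s$; but that route still requires you to justify the identification of conditionals, which your proposal does not do.
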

Let $\hat h_{r} (x) : =  \frac{d p_{r}} {dm_{r}}(x)$ denote the density. Clearly $\hat h_{r}$ can be defined arbitrarily outside $M_{r}$.
Therefore for $\mathcal{L}^{1}$-a.e.  $p_{r} = \hat h_{r} m_{r}$.

\begin{remark} \label{R:lifting}
Let us consider the set of geodesic  
\[
\G_{[0,R]}(M):= \{ \gamma : [0, R] \to M, \textrm{constant speed geodesic} \}.
\]
Let $\nu \in \mathcal{P}(\G_{[0,R]}(M))$ such that for $\mathcal{L}^{1}$-a.e. $r \in [0,R]$, $e_{r\,\sharp} \nu = p_{r}$.
Neglecting a set of arbitrarily small $\nu$-measure, we assume w.l.o.g. that 
\[
G: = \supp[\nu] \subset \G_{[0,R]}(M),\quad 
\hat G_{r}: = e_{r}(G) \subset M_{r},  \quad \hat G : = \cup_{r\in [0,R]}\hat G_{r} \subset M,
\]
with $G$ compact and the maps $e_{r} : G \to \hat G_{r}$ and 
\[
\begin{aligned}
e: (0,R) \times G &~ \to  \hat G \crcr
(r,\gamma) &~\mapsto  e_{r}(\gamma) : = \gamma_{r}
\end{aligned}
\]
are both homeomorphisms. We also prefer to think of $\hat h_{r}$ as a function defined on $G$ rather than on $\hat G$, hence define 
$h_{r}: G \to [0,\infty]$ by $h_{r}(\gamma):= \hat h_{r}(\gamma_{r})$.
\end{remark}

\section{The $(N-1)$-dimensional estimate}\label{S:reduction}

Consider $H \subset G$, $\nu$-measurable with $\nu(H)>0$ and numbers $R_{0},L_{0},R_{1},L_{1}>0$ with $R_{0} < R_{1}$
such that $R_{t}+L_{t}<R$ for all $t \in [0,1]$ where $R_{t}:= (1-t) R_{0} + tR_{1}$ and $L_{t}:= (1-t)L_{0} + tL_{1}$, then the following holds. 

\begin{lemma}
The curve
\begin{equation}\label{E:curve}
t \mapsto \mu_{t}  : =  \frac{1}{L_{t}\nu(H)} \int_{0}^{R} 1_{(R_{t},R_{t}+L_{t})\times H}(e^{-1}(x))   p_{r}(dx) \mathcal{L}^{1}(dr) \in \mathcal{P}(M)
\end{equation}
is a geodesic.
\end{lemma}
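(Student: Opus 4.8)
The plan is to exhibit $(\mu_t)_{t\in[0,1]}$ as the image under the evaluation/projection machinery of a suitable measure on geodesics, and to check that the induced coupling at the endpoints is optimal with cost equal to the (squared) distance realized by the obvious reparametrized geodesics inside $G$. The key geometric observation is that each geodesic $\gamma\in G\subset\G_{[0,R]}(M)$ emanates from $o$, so along $\gamma$ the function $r\mapsto\gamma_r$ is an arc-length-type parametrization with $d(\gamma_r,\gamma_{r'})=|r-r'|\cdot(\text{speed})$ and, crucially, $d(o,\gamma_r)=r$. Hence for a fixed $\gamma\in H$ the segment $\gamma\llcorner_{[R_0,R_0+L_0]}$ can be moved to the segment $\gamma\llcorner_{[R_1,R_1+L_1]}$ by the affine rescaling $r\mapsto R_t+\tfrac{L_t}{L_0}(r-R_0)$, and this rescaling traces out, as $t$ varies, a genuine constant-speed geodesic in $M$ from the point $\gamma_{R_0+\lambda L_0}$ to the point $\gamma_{R_1+\lambda L_1}$ for each parameter $\lambda\in(0,1)$ — because on the single non-branching geodesic $\gamma$ everything reduces to the one-dimensional statement that affine interpolation of two subintervals of $\R$ is a geodesic interpolation.

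Concretely, I would first rewrite $\mu_t$ by the change of variables $r=R_t+\lambda L_t$, $\lambda\in(0,1)$, turning \eqref{E:curve} into
\[
\mu_t=\frac{1}{\nu(H)}\int_0^1\!\!\int_H h_{R_t+\lambda L_t}(\gamma)\,(e_{R_t+\lambda L_t})_\sharp(\nu\llcorner_H)(dx)\,d\lambda,
\]
i.e. $\mu_t$ is the push-forward, under the map $(\lambda,\gamma)\mapsto \gamma_{R_t+\lambda L_t}$, of the fixed probability measure $\tfrac{1}{\nu(H)}(\mathcal L^1\llcorner_{(0,1)})\otimes(\nu\llcorner_H)$ weighted by the appropriate density (so that it is indeed a probability measure; here one uses $p_r=\hat h_r m_r$ and the disintegration $m=\int m_r\,dr$ together with Remark \ref{R:lifting} to identify $\hat h_r$ with $h_r$ on $G$). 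I would then define, for $t\in[0,1]$, the map $T_t:(0,1)\times H\to M$, $T_t(\lambda,\gamma):=\gamma_{R_t+\lambda L_t}$, and observe that for each fixed $(\lambda,\gamma)$ the curve $t\mapsto T_t(\lambda,\gamma)$ is a constant-speed geodesic in $M$ (affine reparametrization along $\gamma$). Pushing the above probability measure forward under $(\lambda,\gamma)\mapsto(t\mapsto T_t(\lambda,\gamma))\in\G(M)$ gives a measure $\Xi$ on $\G(M)$ with $(e_t)_\sharp\Xi=\mu_t$; by Lemma \ref{L:geod} (read in reverse: a measure on $\G(M)$ whose time marginals are $\mu_t$ and whose endpoint coupling is optimal yields a $d_W$-geodesic), it remains only to verify optimality of $(e_0,e_1)_\sharp\Xi$.

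For optimality I would use the standard cyclical-monotonicity / no-crossing argument available in non-branching spaces: two geodesics $t\mapsto T_t(\lambda,\gamma)$ and $t\mapsto T_t(\lambda',\gamma')$ that intersect at an interior time must coincide there, and since they live on the fixed geodesics $\gamma,\gamma'$ issuing from $o$ with $d(o,T_t(\lambda,\gamma))=R_t+\lambda L_t$ strictly increasing in $\lambda$ for fixed $t$, one checks the family $\{T_\cdot(\lambda,\gamma)\}$ is a non-crossing family of geodesics; a non-crossing measurable family of geodesics always induces an optimal coupling of its endpoints (this is exactly the criterion behind Lemma \ref{L:geod} and is how $d_W$-geodesics are characterized on non-branching spaces). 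Equivalently, and perhaps cleaner to write, one estimates $d_W(\mu_s,\mu_t)\le|t-s|\,d_W(\mu_0,\mu_1)$ directly by using $\Xi$ as a competitor coupling and the constant-speed property of each $t\mapsto T_t(\lambda,\gamma)$, while the reverse inequality $d_W(\mu_s,\mu_t)\ge|t-s|\,d_W(\mu_0,\mu_1)$ is the triangle inequality; this sandwich forces $(\mu_t)$ to be a geodesic without ever explicitly invoking optimality of the endpoint plan. The main obstacle is the bookkeeping in the change of variables showing $\mu_t$ is a probability measure and that $T_t$ is the correct map — in particular interchanging the $\lambda$-integral with the disintegration $m=\int m_r\,dr$ and making sure the density factors $h_{R_t+\lambda L_t}$ are handled measurably (this is where the homeomorphism properties in Remark \ref{R:lifting} and strong consistency of the polar disintegration are used); the geodesic property itself is then essentially a one-dimensional triviality propagated along each non-branching ray from $o$.
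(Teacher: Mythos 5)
Your main argument matches the paper's one-line proof: both build the explicit coupling that matches $\gamma_{R_s+\lambda L_s}$ with $\gamma_{R_t+\lambda L_t}$ for $\lambda\in[0,1]$, $\gamma\in H$, and then invoke $d^2$-cyclical monotonicity (for which the clean reason is that, radially, the matching of $\{R_s+\lambda L_s\}$ with $\{R_t+\lambda L_t\}$ is monotone in $\lambda$ since $L_s,L_t>0$, and $d(\gamma_a,\gamma'_b)\geq|a-b|$ by the triangle inequality through $o$; the angular part is held fixed, so everything reduces to a one-dimensional monotone-rearrangement argument). That is the same route, just spelled out through the lifting $\Xi$.

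Two cautionary remarks. First, the blanket statement ``a non-crossing measurable family of geodesics always induces an optimal coupling of its endpoints'' is stronger than what is true and stronger than what you need; the honest criterion is cyclical monotonicity, which here follows from the one-dimensional monotonicity above, not from non-crossing per se. Second, and more substantively, the alternative ``sandwich'' you offer is circular as stated. Using $\Xi$ as a competitor only gives $d_W(\mu_s,\mu_t)\leq|t-s|\,C$ where $C^2=\int d^2(T_0,T_1)\,d\Xi$ is the \emph{cost of your chosen coupling}, and a priori $C\geq d_W(\mu_0,\mu_1)$. To write $d_W(\mu_s,\mu_t)\leq|t-s|\,d_W(\mu_0,\mu_1)$ you already need $C=d_W(\mu_0,\mu_1)$, i.e. optimality of $(e_0,e_1)_\sharp\Xi$ -- precisely the fact you claim to be avoiding. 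If $C>d_W(\mu_0,\mu_1)$ the triangle-inequality lower bound only yields $d_W(\mu_s,\mu_t)\geq d_W(\mu_0,\mu_1)-(1-|t-s|)C$, which is vacuous. So the cyclical-monotonicity step cannot be dispensed with; stick with the first argument, which is correct and is exactly what the paper does.
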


\begin{proof}
Observe that coupling each $\gamma_{R_{s}+\lambda L_{s}}$ with $\gamma_{R_{t}+\lambda L_{t}}$ for $\lambda \in [0,1], \gamma \in H$
we obtain a $d^{2}$-cyclically monotone coupling of $\mu_{s}$ with $\mu_{t}$. The property then follows straightforwardly.
\end{proof}

Hence, the optimal transport is achieved by not changing the ``angular'' parts and coupling radial parts according to 
optimal coupling on $\erre$. Observe that for each $t \in [0,1]$ the density $\r_{t}(x)$ of $\mu_{t}$ w.r.t. $m$ is given by
\begin{equation}\label{E:density}
\r_{t}(\gamma_{r}) =
\begin{cases}
\displaystyle \frac{1}{L_{t}\nu(H)}  h_{r}(\gamma), & (r,\gamma) \in  [R_{t},R_{t} + L_{t}] \times H, \crcr 
0, & \textrm{otherwise}.
\end{cases}
\end{equation}

The following regularity result for densities holds true. 
\begin{lemma}\label{L:regular}
For $\nu$-a.e. $\gamma \in G$, the function $r \mapsto h_{r}^{-1/N}(\gamma)$ is semi-concave on $(0,R)$ and satisfies in distributional sense 
\[
\partial^{2}_{r} h_{r}^{-1/N}(\gamma) \leq -\frac{K}{N} h_{r}^{-1/N}(\gamma).
\]
\end{lemma}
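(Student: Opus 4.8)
The plan is to derive the differential inequality from the local curvature-dimension condition applied along the radial geodesics, using the geodesic curve $t\mapsto\mu_t$ from the previous lemma together with the pointwise formulation \eqref{E:cdpunto}. First I would fix a generic $\gamma\in G$ and a radius $r_{0}\in(0,R)$, and choose small parameters so that the interval of integration $[R_{t},R_{t}+L_{t}]$ is concentrated near $r_{0}$ for $t$ in a short interval around some reference time; more precisely, I would test $\mathsf{CD}_{loc}(K,N)$ on the measures $\mu_{0},\mu_{1}$ built from a thin angular slice $H$ around $\gamma$ and thin radial windows of length $L_{0},L_{1}$. Since $\mathsf{CD}_{loc}(K,N)$ holds in a neighborhood $M(x)$ of each point, one first localizes: cover the relevant portion of $\hat G$ by finitely many such neighborhoods and work inside one of them, which is possible because $G$ is compact and the windows can be taken arbitrarily small. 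Inside such a neighborhood $\mathsf{CD}_{loc}(K,N)$ gives, via the non-branching hypothesis and the usual argument reducing the integral inequality to its pointwise version (the argument behind \eqref{E:cdpunto}, which holds for the locally optimal transports too), the estimate
\[
\r_{t}(\gamma_{r(t)}) \leq \Big[ \tau_{K,N}^{(1-t)}(\ell)\,\r_{0}^{-1/N}(\gamma_{r(0)}) + \tau_{K,N}^{(t)}(\ell)\,\r_{1}^{-1/N}(\gamma_{r(1)}) \Big]^{-N},
\]
where $\ell$ is the length of the radial segment traversed and $r(t)=R_{t}+\lambda L_{t}$ is the moving point.

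Next I would unwind the densities using \eqref{E:density}: $\r_{t}(\gamma_{r(t)}) = \frac{1}{L_{t}\nu(H)} h_{r(t)}(\gamma)$. Plugging this in and taking $-1/N$ powers, the inequality becomes, after cancelling the common factor $\nu(H)^{1/N}$,
\[
\big(L_{t}\big)^{1/N} h_{r(t)}^{-1/N}(\gamma) \;\geq\; \tau_{K,N}^{(1-t)}(\ell)\,\big(L_{0}\big)^{1/N} h_{r(0)}^{-1/N}(\gamma) + \tau_{K,N}^{(t)}(\ell)\,\big(L_{1}\big)^{1/N} h_{r(1)}^{-1/N}(\gamma).
\]
Now I use the factorization $\tau_{K,N}^{(t)}(\theta) = t^{1/N}\sigma_{K,N-1}^{(t)}(\theta)^{(N-1)/N}$ recalled in the excerpt. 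Choosing $L_{0},L_{1}$ so that $L_{t}$ interpolates linearly (which it does by definition, $L_{t}=(1-t)L_{0}+tL_{1}$) and, crucially, letting $L_{0},L_{1}\to 0$ while keeping their ratio fixed, the lengths $\ell$ of the radial transport segments between the windows converge to $|r(1)-r(0)|$, and more importantly the factor $t^{1/N}$ in $\tau$ combines with the $L_{t}^{1/N}$ normalization. The outcome, after this limiting procedure and relabelling, is precisely a $\sigma_{K,N-1}$-convexity inequality for the function $f(r):=h_{r}^{-1/N}(\gamma)$ along $(0,R)$: for $r_{0}<r_{1}$ in $(0,R)$ and $t\in[0,1]$,
\[
f\big((1-t)r_{0}+tr_{1}\big) \;\geq\; \sigma_{K,N-1}^{(1-t)}(r_{1}-r_{0})\,f(r_{0}) + \sigma_{K,N-1}^{(t)}(r_{1}-r_{0})\,f(r_{1}).
\]
This is the standard characterization of functions satisfying $f'' \leq -\frac{K}{N-1} f$ — wait, one must be careful: the exponent. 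Since $\sigma_{K,N-1}^{(t)}$ solves $g'' + \frac{K}{N-1} g = 0$, such $f$ satisfies $f'' \leq -\frac{K}{N-1}f$ in the distributional sense; but the claim in Lemma \ref{L:regular} is $\partial_r^2 f \leq -\frac{K}{N} f$. I would reconcile this by checking the precise index bookkeeping in the factorization of $\tau_{K,N}$ and the R\'enyi exponent $N$ used in the transport (the $\mathsf{CD}_{loc}(K,N)$ inequality is tested with $N'=N$, the relevant one-dimensional slice carries the $(N-1)$-dimensional distortion $\sigma_{K,N-1}$, but the density $h_r$ is a surface density, not a volume density), and it is this interplay that should produce $-\frac{K}{N}$ rather than $-\frac{K}{N-1}$; I expect the correct statement matches the lemma once one tracks that the normalization involves $L_t^{1/N}$ with the full $N$. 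Semi-concavity follows because any function satisfying such a distributional second-order inequality with bounded right-hand side (here bounded on compact subintervals, since $h_r$ is bounded below away from $0$ on the support) is semi-concave.

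The main obstacle, and the step requiring the most care, is the passage from the \emph{local} condition to the \emph{pointwise radial} inequality: one must justify that $\mathsf{CD}_{loc}(K,N)$, which a priori provides \emph{some} geodesic and \emph{some} optimal coupling satisfying \eqref{E:CDloc} only for measures supported in a fixed small neighborhood, actually forces the specific geodesic $t\mapsto\mu_t$ of \eqref{E:curve} — the one that keeps angular parts fixed — to satisfy the entropy inequality, and then that this integrated inequality can be localized to a pointwise-in-$\gamma$, pointwise-in-$r$ statement. The non-branching hypothesis is essential here: it guarantees uniqueness of the Wasserstein geodesic connecting $\mu_0$ and $\mu_1$ (these are absolutely continuous by Lemma \ref{L:surfacevo} and the structure of $p_r$), so the geodesic produced by $\mathsf{CD}_{loc}$ must coincide with our explicit radial one, and then a Lebesgue-differentiation / localization argument in the variables $(r,\gamma)$, together with shrinking $H$ and the windows, extracts the pointwise inequality for $\nu$-a.e.\ $\gamma$. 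A secondary technical point is ensuring all the thin measures involved stay inside one neighborhood $M(x)$; this is handled by the compactness of $G$ and a finite subcover, carrying out the argument separately on each piece and then gluing the $\nu$-null exceptional sets.
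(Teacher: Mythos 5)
Your proposal drifts toward the wrong target. The route you describe — factoring $\tau_{K,N}^{(t)}(\theta)=t^{1/N}\sigma_{K,N-1}^{(t)}(\theta)^{(N-1)/N}$, shrinking the windows, and optimizing over the ratio $L_0/L_1$ — is exactly the mechanism that produces the inequality of Theorem~\ref{T:surface}, which is a $\sigma_{K,N-1}$-concavity statement for $h_r^{-1/(N-1)}$, \emph{not} for $h_r^{-1/N}$. The exponent change from $-1/N$ to $-1/(N-1)$ happens precisely when you optimize the window lengths (this is what Step~2 of Theorem~\ref{T:surface} does: the optimal $L_0,L_1$ turn $L_i^{1/N}h_{R_i}^{-1/N}$ into $h_{R_i}^{-1/(N-1)}$ up to a common factor). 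So the limit you take cannot yield a $\sigma_{K,N-1}$-concavity statement for $f(r)=h_r^{-1/N}$; the index mismatch you notice at the end (``$-K/(N-1)$ rather than $-K/N$'') is a symptom of this, and the hope that ``careful bookkeeping'' will reconcile it is unfounded within your framework — there is nothing to reconcile, the two statements simply concern different powers of $h_r$.

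The idea you are missing, and the one the paper actually uses, is much shorter: invoke the fact from \cite{sturm:loc} that $\mathsf{CD}_{loc}(K,N)$ already implies the \emph{reduced} condition $\mathsf{CD}^{*}(K,N)$ globally, whose defining inequality carries the coefficient $\sigma_{K,N}$ in place of $\tau_{K,N}$. Applying $\mathsf{CD}^{*}(K,N)$ to the geodesic \eqref{E:curve} with constant windows $L_0=L_1=1$ (so that $L_t\equiv 1$ and the normalizations cancel) gives directly the $\sigma_{K,N}$-concavity inequality for $r\mapsto h_r^{-1/N}(\gamma)$, which is equivalent to $\partial_r^2 h_r^{-1/N}\le -\tfrac{K}{N}h_r^{-1/N}$ distributionally. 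No factorization of $\tau$, no limit $L_i\to 0$, and no optimization over the ratio is needed; those belong to the proof of Theorem~\ref{T:surface}, which is a genuinely different (codimension-one) estimate. Your surrounding remarks about non-branching forcing the geodesic to be the radial one, and about covering $\hat G$ by finitely many $\mathsf{CD}_{loc}$-neighborhoods, are reasonable but become unnecessary once you pass through the global $\mathsf{CD}^{*}(K,N)$.
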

\begin{proof}
Recall that $\mathsf{CD}_{loc}(K,N)$ implies $\mathsf{CD}^{*}(K,N)$.
Consider the geodesic $\mu_{t}$ defined in \eqref{E:curve} with $L_{0} = L_{1} = 1$ and apply the definition of $\mathsf{CD}^{*}(K,N)$
to get 
\begin{equation}\label{E:semi-concave}
h_{s}^{-1/N}(\gamma)\geq \frac{\sin (t-s)\sqrt{K/N}}{\sin (t-r)\sqrt{K/N}}h_{r}^{-1/N}(\gamma) +
\frac{\sin (s-r)\sqrt{K/N}}{\sin (t-r)\sqrt{K/N}}h_{t}^{-1/N}(\gamma),
\end{equation}
for all $0<r<s<t<R$ and $\nu$-a.e. $\gamma \in G$. The claim is equivalent to \eqref{E:semi-concave}.
\end{proof}

Now fix an open set $H \subset G$ and $[a,b] \subset [0,R]$ such that the curvature dimension condition $\mathsf{CD}(K,N)$ holds
true for all measures $\mu_{0},\mu_{1}$ supported in $e([a,b]\times \bar H)$.
For each $R_{0},R_{1} \in (a,b)$ choose $L_{0},L_{1}$ such that $R_{0}+L_{0}, R_{1}+L_{1} \leq b$ and define $(\mu_{t})_{t \in [0,1]}$ as before in 
\eqref{E:curve}.
Moreover we have to consider the following map $\Phi : \G_{[0,R]}(M) \times [0,1] \to \G_{[0,1]}(M)$ with $\Phi(\gamma,s)$ being the geodesic
$t \mapsto \eta_{t} = \gamma_{(1-t)(R_{0} +s L_{0}) + t (R_{1}+s L_{1})}$. Consider  
\[
\tilde \nu : = \Phi_{\sharp} \bigg( \frac{1}{\nu(H)} \nu\llcorner_{H} \otimes \mathcal{L}^{1}\llcorner_{[0,1]} \bigg),
\]
then $\mu_{t} = e_{t\,\sharp} \tilde \nu $.

\begin{theorem}\label{T:surface}
For $\nu$-a.e. $\gamma \in H$ and for sufficiently close $R_{0}<R_{1}$ the following holds true:
\begin{equation}\label{E:surface}
h_{R_{1/2}}^{-\frac{1}{N-1}}(\gamma)  
 \geq  \sigma_{K,N-1}^{(1/2)}(R_{1}-R_{0}) \left\{   h_{R_{0}}^{-\frac{1}{N-1}}(\gamma) +   h_{R_{1}}^{-\frac{1}{N-1}}(\gamma) \right\}.
\end{equation}
\end{theorem}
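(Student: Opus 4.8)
The plan is to reduce the claimed $(N-1)$-dimensional inequality \eqref{E:surface} to the one-dimensional differential inequality already established in Lemma \ref{L:regular}, namely that $r \mapsto h_r^{-1/N}(\gamma)$ is semi-concave with $\partial_r^2 h_r^{-1/N}(\gamma) \le -\tfrac{K}{N} h_r^{-1/N}(\gamma)$ in the distributional sense. The point is that Lemma \ref{L:regular} gives curvature $K/N$ and exponent $1/N$, encoding a purely $1$-dimensional distortion $\sigma_{K,N}^{(t)}$, whereas Theorem \ref{T:surface} wants the genuinely $(N-1)$-dimensional distortion $\sigma_{K,N-1}^{(1/2)}$ with exponent $1/(N-1)$; the gap between these two is exactly what must be recovered here, and it is recovered by feeding the \emph{full-dimensional} $\mathsf{CD}(K,N)$ condition (which by hypothesis holds on $e([a,b]\times\bar H)$) into the geodesic $(\mu_t)_{t\in[0,1]}$ of \eqref{E:curve}, using that its density \eqref{E:density} carries the nontrivial extra factor $1/(L_t\nu(H))$ coming from the shrinking/growing radial window of width $L_t$.

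Concretely, I would first fix $R_0<R_1$ close, set $R_{1/2}=(R_0+R_1)/2$, and choose radial widths $L_0,L_1$ freely (subject to $R_i+L_i\le b$); apply the pointwise form \eqref{E:cdpunto} of $\mathsf{CD}(K,N)$ along the geodesic $t\mapsto\mu_t$. Using \eqref{E:density} the density of $\mu_{1/2}$ at the point $\gamma_{R_{1/2}+\lambda L_{1/2}}$ is $\tfrac{1}{L_{1/2}\nu(H)}h_{R_{1/2}+\lambda L_{1/2}}(\gamma)$, and similarly at the endpoints with $L_0,L_1$; plugging into \eqref{E:cdpunto} with $t=1/2$, raising to the power $-1/N$ and rearranging yields, after letting $L_0,L_1\to0$ (so $R_i+\lambda L_i\to R_i$, $\lambda\to$ the appropriate intermediate parameter), an inequality of the shape
\[
\Big(\tfrac{1}{L_{1/2}}\Big)^{-1/N} h_{R_{1/2}}^{-1/N}(\gamma) \ge \tau_{K,N}^{(1/2)}(R_1-R_0)\Big[\big(\tfrac{1}{L_0}\big)^{-1/N} h_{R_0}^{-1/N}(\gamma) + \big(\tfrac{1}{L_1}\big)^{-1/N} h_{R_1}^{-1/N}(\gamma)\Big].
\]
Here the crucial computation is that the $1$-dimensional optimal coupling of the radial windows forces $L_{1/2}=(L_0+L_1)/2$, so that the extra factors $L_t^{1/N}$ combine with $\tau_{K,N}^{(1/2)} = (1/2)^{1/N}\,\sigma_{K,N-1}^{(1/2)}{}^{(N-1)/N}$ in such a way that the purely linear ($t^{1/N}$) part of $\tau$ is absorbed by the freedom in choosing $L_0,L_1$. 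Optimizing over $L_0,L_1$ (or equivalently choosing them proportionally to the right powers of the $h_{R_i}$), the $t^{1/N}$ factor and the $L_t$ factors cancel and one is left precisely with the $\sigma_{K,N-1}^{(1/2)}$-weighted concavity inequality \eqref{E:surface} in the exponent $1/(N-1)$. This is the standard Hölder/reparametrization trick that converts a $\tau_{K,N}$ statement on a product-like geodesic into a $\sigma_{K,N-1}$ statement on the angular slice.

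I expect the main obstacle to be making the ``$L_i\to 0$'' limit rigorous: \eqref{E:cdpunto} holds only for $\pi$-a.e. pair along \emph{one} geodesic $\mu_t$, so one needs a measurable selection / Fubini argument (in the spirit of $\tilde\nu = \Phi_\sharp(\cdots)$ and $\mu_t = e_{t\sharp}\tilde\nu$ introduced just before the theorem) to pass from ``for a.e. $\gamma$, for a.e. pair of window parameters'' to ``for a.e. $\gamma$, for all sufficiently close $R_0<R_1$'' — this is why the statement is hedged with ``for sufficiently close $R_0<R_1$''. Secondarily, one must check that the densities $\r_t$ computed in \eqref{E:density} are genuinely the $m$-densities (using the disintegration $m=\int m_r\,dr$ and the homeomorphism $e$ from Remark \ref{R:lifting}), and that the non-branching hypothesis is what guarantees the coupling described in the Lemma preceding this section is the \emph{unique} optimal one, so that \eqref{E:cdpunto} applies to it. Once these measure-theoretic points are handled, the algebra reducing $\tau_{K,N}$ to $\sigma_{K,N-1}$ is elementary.
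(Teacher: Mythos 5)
Your proposal matches the paper's proof in all essentials: apply the pointwise form of $\mathsf{CD}(K,N)$ along the geodesic \eqref{E:curve}, read off the resulting inequality for $h_r^{-1/N}(\gamma)$ via the density formula \eqref{E:density}, and then optimize over the window widths $L_0,L_1$ so that the $t^{1/N}$ factor inside $\tau_{K,N}^{(1/2)}=(1/2)^{1/N}\sigma_{K,N-1}^{(1/2)}{}^{(N-1)/N}$ is absorbed and one lands on the $\sigma_{K,N-1}^{(1/2)}$-weighted inequality in exponent $1/(N-1)$. The only small technical difference is in the limiting step: the paper keeps $L_0,L_1$ fixed (but small) and sends the window-position parameter $s\searrow 0$, invoking the continuity of $r\mapsto h_r(\gamma)$ from Lemma \ref{L:regular} and a common-exceptional-set argument before optimizing over $L_0,L_1$, rather than sending $L_0,L_1\to 0$ directly as your wording suggests.
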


\begin{proof}
Consider the measures $\mu_{0}$ and $\mu_{1}$, the corresponding measure on the space of geodesics $\tilde \nu$
and recall that  $\mu_{t} = \r_{t} m$.

{\it Step 1.}
Condition $\mathsf{CD}_{loc}(K,N)$ for $t=1/2$ and the assumptions on $R_{0},L_{0}$ and $R_{1},L_{1}$ imply that for $\tilde \nu$-a.e. 
$\eta \in \G_{[0,1]}(M)$
\[
\r^{-1/N}_{1/2}(\eta_{1/2}) \geq \tau_{K,N}^{(1/2)}(d(\eta_{0},\eta_{1})) \left\{ \r_{0}^{-1/N}(\eta_{0}) + \r_{1}^{-1/N}(\eta_{1}) \right\},
\]
that can be formulated also in the following way: for $\mathcal{L}^{1}$-a.e. $s\in [0,1]$ and $\nu$-a.e. $\gamma \in H$ 
\[
\r^{-1/N}_{1/2}(\gamma_{R_{1/2} + s L_{1/2}}) \geq 
\tau_{K,N}^{(1/2)}(R_{1}-R_{0} + s | L_{1} - L_{0}| ) \left\{ \r_{0}^{-1/N}(\gamma_{R_{0}+sL_{0}}) + \r_{1}^{-1/N}(\gamma_{R_{1}+s L_{1}}) \right\}.
\]
Then using \eqref{E:density} and the continuity of $r \mapsto h_{r}(\gamma)$ (Proposition \ref{L:regular}), letting $s \searrow 0$, it follows that
\begin{equation}\label{E:splitting}
(L_{0}+L_{1})^{1/N} h^{-1/N}_{R_{1/2}}(\gamma) \geq 
\sigma_{K,N}^{(1/2)}(R_{1}-R_{0})^{\frac{N-1}{N}} \left\{L_{0}^{1/N} h_{R_{0}}^{-1/N}(\gamma) +L_{1}^{1/N} h_{R_{1}}^{-1/N}(\gamma) \right\}
\end{equation}
for all $R_{0} < R_{1} \in (a,b)$, all sufficiently small $L_{0},L_{1}$ and $\nu$-a.e. $\gamma \in H$, with exceptional set depending on 
$R_{0},R_{1},L_{0},L_{1}$.

{\it Step 2.} Note that all the involved quantities in \eqref{E:splitting} are continuous w.r.t. $R_{0},R_{1},L_{0},L_{1}$, therefore there exists a common 
exceptional set $H' \subset H$of zero $\nu$-measure such that \eqref{E:splitting} holds true for all 
for all $R_{0} < R_{1} \in (a,b)$, all sufficiently small $L_{0},L_{1}$ and all $\gamma  \in H \setminus H'$.

For fixed $R_{0} < R_{1} \in (a,b)$ and fixed $\gamma \in H\setminus H'$, varying $L_{0},L_{1}$ in \eqref{E:splitting} yields 
\[
h_{R_{1/2}}^{-\frac{1}{N-1}}(\gamma)  
 \geq  \sigma_{K,N-1}^{(1/2)}(R_{1}-R_{0}) \left\{   h_{R_{0}}^{-\frac{1}{N-1}}(\gamma) +   h_{R_{1}}^{-\frac{1}{N-1}}(\gamma) \right\}.
\]
Indeed the optimal choice is 
\[
L_{0}  = L \frac{h_{R_{0}}^{-1/(N-1)}(\gamma)  }{h_{R_{0}}^{-1/(N-1)}(\gamma) + h_{R_{1}}^{-1/(N-1)}(\gamma)}, \qquad
L_{1}  = L \frac{h_{R_{1}}^{-1/(N-1)}(\gamma)  }{h_{R_{0}}^{-1/(N-1)}(\gamma) + h_{R_{1}}^{-1/(N-1)}(\gamma)}
\]
for sufficiently small $L>0$.
\end{proof}

\section{The global estimates}\label{S:global}
From Theorem \ref{T:surface} we have that for every fixed $\gamma \in G\setminus H'$: 
for every $0< R_{0} <  R$ there exists $\ve>0$ such that for all $R_{0} <R_{1}< R_{0}+\ve$ it holds
\[
\begin{aligned}
h_{R_{1/2}}^{-\frac{1}{N-1}}(\gamma)  
 \geq  \sigma_{K,N-1}^{(1/2)}(R_{1}-R_{0}) \left\{   h_{R_{0}}^{-\frac{1}{N-1}}(\gamma) +   h_{R_{1}}^{-\frac{1}{N-1}}(\gamma) \right\}.
\end{aligned}
\]
We prove that mid-points inequality is equivalent to the complete inequality.

\begin{lemma}[Midpoints]\label{L:mid}
Inequality \eqref{E:surface} holds true if and only if
\begin{equation}\label{E:reale}
h_{ R_{t}}^{-\frac{1}{N-1}} (\gamma)  \geq 
\sigma_{K,N-1}^{(1-t)}(R_{1}-R_{0})  h_{ R_{0}}^{-\frac{1}{N-1}}(\gamma) + 
\sigma_{K,N-1}^{(t)}(R_{1}-R_{0})h_{ R_{1}}^{-\frac{1}{N-1}} (\gamma)
\end{equation}
for all $t \in [0,1]$.
\end{lemma}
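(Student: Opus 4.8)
The plan is to show the equivalence of the local midpoint inequality \eqref{E:surface} with the global two–point inequality \eqref{E:reale} by a ``concavity-from-midpoint-concavity'' argument for the weighted function along each fixed geodesic. Fix $\gamma \in G \setminus H'$ and, to lighten notation, write $f(r) := h_r^{-1/(N-1)}(\gamma)$ for $r \in (0,R)$; recall from Lemma \ref{L:regular} that the comparable function $h_r^{-1/N}(\gamma)$ is semi-concave, hence in particular $f$ is locally bounded and (up to the exceptional null set) continuous in $r$, which is what will make the measurable/continuity issues harmless. The direction \eqref{E:reale} $\Rightarrow$ \eqref{E:surface} is immediate by taking $t=1/2$ and using $\sigma_{K,N-1}^{(1/2)}(R_1-R_0)=\sigma_{K,N-1}^{(1-1/2)}(R_1-R_0)$. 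So the content is the converse.

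For the converse I would argue as follows. The midpoint inequality \eqref{E:surface} says precisely that $f$ satisfies a midpoint version of the differential inequality $f'' \leq -\tfrac{K}{N-1}f$ in the sense of comparison with solutions of $g'' = -\tfrac{K}{N-1}g$: on any interval $[R_0,R_1]$ on which \eqref{E:surface} holds, if $g$ is the (unique) solution of $g'' + \tfrac{K}{N-1}g = 0$ with $g(R_0)=f(R_0)$, $g(R_1)=f(R_1)$, then \eqref{E:surface} is exactly the statement $f(R_{1/2}) \geq g(R_{1/2})$, because by the defining formula
\[
g(R_{1/2}) = \sigma_{K,N-1}^{(1/2)}(R_1-R_0)\bigl(g(R_0)+g(R_1)\bigr),
\]
which one checks from the explicit $\sin$/$\sinh$/linear expressions for $\sigma$ (and the addition formulas for these functions). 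Iterating the midpoint inequality on dyadic subdivisions of $[R_0,R_1]$ and passing to the limit using the continuity of $f$, one upgrades ``$f$ lies above $g$ at the midpoint'' to ``$f$ lies above $g$ on all of $[R_0,R_1]$'', i.e. $f(R_t) \geq g(R_t)$ for every $t\in[0,1]$, which is exactly \eqref{E:reale} after writing $g(R_t) = \sigma_{K,N-1}^{(1-t)}(R_1-R_0)f(R_0) + \sigma_{K,N-1}^{(t)}(R_1-R_0)f(R_1)$.

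Concretely the dyadic iteration goes: set $m_0 := R_0$, $m_1 := R_1$, and for each dyadic rational $t = k/2^n \in [0,1]$ define $m_t := R_t$; applying \eqref{E:surface} to the pair $(R_{(k-1)/2^n}, R_{(k+1)/2^n})$ gives $f(m_{k/2^n}) \geq \sigma_{K,N-1}^{(1/2)}(2^{-n}(R_1-R_0))\bigl(f(m_{(k-1)/2^n}) + f(m_{(k+1)/2^n})\bigr)$, and an induction on $n$ — comparing at each stage with the exact solution $g$ and using that $g$ itself satisfies the midpoint identity with equality — shows $f(m_{k/2^n}) \geq g(m_{k/2^n})$ for all dyadic $k/2^n$. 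Density of the dyadics in $[0,1]$ together with continuity of $r\mapsto f(r)$ on $(0,R)$ (from Lemma \ref{L:regular}) then yields \eqref{E:reale} for all $t$. One small caveat to handle: \eqref{E:surface} is only assumed to hold ``for sufficiently close $R_0 < R_1$'', so strictly speaking one first gets \eqref{E:reale} on small intervals; but as noted just before the lemma, for each fixed $R_0$ there is $\ve>0$ making it work on $[R_0, R_0+\ve]$, and \eqref{E:reale} on overlapping small intervals concatenates (again because the comparison with solutions $g$ of a second-order linear ODE is stable under concatenation of subintervals) to give it on the whole range — this patching, plus keeping track that the exceptional $\nu$-null set $H'$ can be chosen uniformly in the (countably many) dyadic endpoints, is the only genuinely delicate bookkeeping. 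The main obstacle is therefore not any single estimate but organizing this limiting argument cleanly: verifying that the midpoint identity for $\sigma_{K,N-1}$ really is the midpoint value of the linear ODE's solution (a computation with the $\sin$/$\sinh$ addition theorems), and that dyadic midpoint-concavity plus continuity forces full concavity relative to that ODE.
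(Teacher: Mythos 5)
Your proposal is correct and follows essentially the same route as the paper: both establish the dyadic midpoint inequality by induction and pass to the limit using continuity of $h$ and $\sigma$, your comparison with the explicit solution $g$ of $g''+\tfrac{K}{N-1}g=0$ being just a clean repackaging of the $\sigma$-coefficient identity the paper invokes via Proposition~2.10 of \cite{sturm:loc}. Your closing caveat about the ``sufficiently close'' restriction is well observed, but that patching is deliberately factored out of Lemma~\ref{L:mid} and handled separately in Theorem~\ref{T:loctoglob}, so it is not a gap here.
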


\begin{proof}
We only consider the case $K>0$. The general case requires analogous calculations.  
Fix $0 \leq R_{0} \leq   R_{1} \leq R$, put $\theta:=  R_{1} -  R_{0}$ and $h(s): = h_{s}(\gamma) = h(\gamma(s))$.

{\it Step 1.} For every $k \in \enne$ we have 
\[
\begin{aligned}
h^{-\frac{1}{N-1}}(R_{0} + l 2^{-k}\theta))   \geq &~
\sigma_{K,N-1}^{(1/2)}(2^{-k+1}\theta)  h^{-\frac{1}{N-1}}(R_{0} + (l -1)2^{-k}\theta))  \crcr
&~ +\sigma_{K,N-1}^{(1/2)}(2^{-k+1}\theta) h^{-\frac{1}{N-1}} (R_{0} + (l+1)2^{-k}\theta)),
\end{aligned}
\]
for every odd $ l = 0, \dots, 2^{k}$.

{\it Step 2.} We perform an induction argument on $k$: suppose that inequality \eqref{E:reale} is satisfied for all $t = l 2^{-k+1} \in [0,1]$ with $l$ odd, 
then $\eqref{E:reale}$ is verified by every $t = l2^{-k} \in [0,1]$ with $l$ odd:
\[
\begin{aligned}
h^{-\frac{1}{N-1}} (R_{0} +& l2^{-k}\theta))  \crcr
\geq &~ \sigma_{K,N-1}^{(1/2)}(2^{-k+1}\theta)  h^{-\frac{1}{N-1}} (R_{0} + (l-1)2^{-k}\theta)) \crcr
&~ +\sigma_{K,N-1}^{(1/2)}(2^{-k+1}\theta) h^{-\frac{1}{N-1}}(R_{0} + (l+1)2^{-k}\theta))  \crcr
\geq &~  \sigma_{K,N-1}^{(1/2)}(2^{-k+1}\theta) 
\Big[h^{-\frac{1}{N-1}}(R_{0}) \sigma_{K,N-1}^{(1-(l-1)2^{-k})}(\theta)+ h^{-\frac{1}{N-1}}(R_{1}) 
\sigma_{K,N-1}^{((l-1)2^{-k})}(\theta) \Big]  \crcr
&~ +  \sigma_{K,N-1}^{(1/2)}(2^{-k+1}\theta) 
\Big[h^{-\frac{1}{N-1}}(R_{0}) \sigma_{K,N-1}^{(1-(l+1)2^{-k})}(\theta)+ h^{-\frac{1}{N-1}}(R_{1}) 
\sigma_{K,N-1}^{((l+1)2^{-k})}(\theta) \Big].
\end{aligned}
\]
Following the calculation of the proof of Proposition 2.10 of \cite{sturm:loc}, one obtain that 
\[
\begin{aligned}
h^{-\frac{1}{N-1}}(R_{0} + l 2^{-k}\theta))   \geq
 \sigma_{K,N-1}^{(1 - l 2^{-k})}(\theta)   h^{-\frac{1}{N-1}}(R_{0} )
 + \sigma_{K,N-1}^{(l 2^{-k})}(\theta) h^{-\frac{1}{N-1}}(R_{1}).
\end{aligned}
\]
The claim is easily proved by the continuity of $h$ and $\sigma$.
\end{proof}

We prove that \eqref{E:reale} satisfies a local-to-global property.

\begin{theorem}[Local to Global]\label{T:loctoglob}
Suppose that for every $R_{0} \in [0,R]$ there exists $\ve>0$ such that whenever $R_{0}<R_{1}< R_{0}+\ve$ then \eqref{E:reale} holds true for all $t \in [0,1]$. Then \eqref{E:reale} holds true for all $0 < R_{0} < R_{1} \leq R$ and $t \in [0,1]$.
\end{theorem}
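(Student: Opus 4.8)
The plan is to prove that the property "inequality \eqref{E:reale} holds for all $t\in[0,1]$ whenever $R_1-R_0$ is sufficiently small" globalizes to all pairs $0<R_0<R_1\le R$. The key structural fact is that the $\sigma$-coefficients satisfy a concatenation (or semigroup-type) identity analogous to the one used in \cite{sturm:loc}: if the one-dimensional comparison inequality holds on two adjacent subintervals sharing an endpoint, then it holds on their union. Concretely, I would first record the elementary fact that \eqref{E:reale} is equivalent to the statement that $s\mapsto h_s^{-1/(N-1)}(\gamma)$ is a supersolution of $u''+\tfrac{K}{N-1}u=0$ in the comparison (concavity-type) sense on the interval $[R_0,R_1]$ — this is essentially the content of Lemma \ref{L:regular} rephrased, and it is exactly the distributional differential inequality $\partial_s^2 h_s^{-1/(N-1)}\le -\tfrac{K}{N-1}h_s^{-1/(N-1)}$. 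Being a supersolution of a linear second-order ODE in the distributional/viscosity sense is manifestly a local property, so once the equivalence is in place the globalization is immediate.

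The main steps, in order, would be: (1) Fix $\gamma\in G\setminus H'$ and write $f(s):=h_s^{-1/(N-1)}(\gamma)$, which by Lemma \ref{L:regular} is semi-concave, hence continuous, on $(0,R)$. (2) Show that \eqref{E:reale} on a subinterval $[R_0,R_1]$ is equivalent to the comparison inequality $f(s)\ge \sigma_{K,N-1}^{(1-t)}(R_1-R_0)f(R_0)+\sigma_{K,N-1}^{(t)}(R_1-R_0)f(R_1)$ for the endpoints, where $s=R_t$; the point is that $a\mapsto \sigma^{(1-t)}_{K,N-1}(\theta)a + \sigma^{(t)}_{K,N-1}(\theta)b$ is precisely the solution of $u''=-\tfrac{K}{N-1}u$ on $[R_0,R_1]$ with boundary values $a$ at $R_0$ and $b$ at $R_1$, using the addition formulae for $\sin$ (for $K>0$), or the corresponding identities for $\sinh$ and the linear case. (3) Invoke the hypothesis: every point $R_0\in[0,R]$ has a neighborhood $[R_0,R_0+\ve)$ on which the comparison holds; this says $f$ lies, locally, below no chord built from the ODE fundamental solutions — equivalently, $f$ is a supersolution of the ODE near every point. (4) Conclude by the standard maximum-principle argument that a function which is locally a supersolution is globally a supersolution: given arbitrary $0<R_0<R_1\le R$, let $g$ be the (genuine) solution of $u''=-\tfrac{K}{N-1}u$ on $[R_0,R_1]$ agreeing with $f$ at the two endpoints, and show $f\ge g$ on $[R_0,R_1]$ by contradiction — if $f-g$ attains a negative minimum at an interior point $s_*$, then on a small interval $(s_*-\delta,s_*+\delta)$ around $s_*$ the local hypothesis forces $f$ to dominate the ODE solution matching $f$ at $s_*\pm\delta$, while comparison of that local solution with $g$ (both solve the same ODE, and $f-g<0$ at $s_*\pm\delta$) yields $f(s_*)-g(s_*)\ge$ a value exceeding the supposed minimum, a contradiction. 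Some care is needed so that $R_1-R_0$ can be large enough that $\sin$ vanishes; but if $(R_1-R_0)\sqrt{K/(N-1)}\ge\pi$ the right-hand side of \eqref{E:reale} is $+\infty$ by the convention in \eqref{E:tau} only at the extreme, and in that case the claim follows a fortiori, or one subdivides to keep each subinterval short — so WLOG one may assume $\theta\sqrt{K/(N-1)}<\pi$ throughout and the denominators are positive. Alternatively, and perhaps more in the spirit of the surrounding paper, I would mimic the dyadic induction of Lemma \ref{L:mid}: cover $[R_0,R_1]$ by finitely many intervals on each of which \eqref{E:reale} holds, then chain them together using the concatenation identity for $\sigma$ — precisely "the calculation of the proof of Proposition 2.10 of \cite{sturm:loc}" that is already cited.

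The hard part will be the bookkeeping in the concatenation/chaining step, i.e. verifying that the composed inequalities telescope to the claimed $\sigma_{K,N-1}^{(t)}(R_1-R_0)$ with the correct argument and exponent, exactly as in Proposition 2.10 of \cite{sturm:loc}; the trigonometric identities make this work for $K>0$, and $\sinh$ identities plus the trivial linear case handle $K\le 0$, but the uniformity of the "sufficiently small $\ve$" across a compact interval (so that only finitely many pieces are needed) must be extracted from the hypothesis, using continuity of $f$ and a Lebesgue-number / compactness argument on $[R_0,R_1]$. Everything else is routine: the equivalence in step (2) is a direct computation with the fundamental solutions, and the maximum-principle conclusion in step (4) is standard once the local-supersolution reformulation is granted.
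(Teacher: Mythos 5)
Your proposal is correct, and your primary route is genuinely different from the paper's. The paper proves Theorem~\ref{T:loctoglob} by an explicit, combinatorial argument: compactness of $[0,R]$ produces a Lebesgue number $\lambda$ which is taken dyadic, $\lambda=2^{-k}\theta$; the local midpoint inequality is then iterated along adjacent subintervals, and the submultiplicativity $\sigma_{K,N-1}^{(1/2)}(\alpha)^{2}\geq \sigma_{K,N-1}^{(1/2)}(2\alpha)$ lets the chained coefficients telescope upward, scale by scale, from $2^{-k}\theta$ to $\theta/2$, at which point Lemma~\ref{L:mid} (midpoints $\Rightarrow$ all $t$) finishes. You instead reformulate \eqref{E:reale} on an interval as the statement that $f(s)=h_{s}^{-1/(N-1)}(\gamma)$ is a continuous distributional supersolution of $u''+\tfrac{K}{N-1}u=0$, observe that such a property is inherently local (small test functions), and recover the chord inequality on arbitrary intervals by a standard maximum-principle/comparison argument with the genuine ODE solution matching $f$ at the two endpoints. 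The Taylor expansion $\sigma^{(1/2)}_{K,N-1}(\theta)=\tfrac12(1+\tfrac{K}{8(N-1)}\theta^2+O(\theta^4))$ indeed makes the local-chord--to--distributional-supersolution equivalence work, and the reverse direction is the usual Sturm comparison. Your secondary suggestion (dyadic chaining in the spirit of Proposition~2.10 of~\cite{sturm:loc}) is essentially what the paper actually does. One small slip: you cite Lemma~\ref{L:regular} as ``essentially the same content,'' but that lemma gives the weaker differential inequality $\partial^{2}_{r}h_{r}^{-1/N}\leq-\tfrac{K}{N}h_{r}^{-1/N}$ with exponent $1/N$ (coming from $\mathsf{CD}^{*}(K,N)$ alone), not the sharpened $1/(N-1)$ version encoded by \eqref{E:reale}; what you actually need from it is only semi-concavity and continuity of $r\mapsto h_{r}(\gamma)$, which do carry over. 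The trade-off between the two routes: your ODE/comparison argument is cleaner and avoids the explicit telescoping bookkeeping, at the cost of making the reduction ``local chord inequality $\Leftrightarrow$ distributional supersolution'' precise; the paper's argument is more elementary and self-contained, mirroring the chaining identities it already develops for $\mathsf{CD}^{*}$.
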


\begin{proof}
We only consider the case $K>0$. The general case requires analogous calculations.  
Fix $0 < R_{0} < R_{1} \leq R$, $\theta : =R_{1} - R_{0} $ and $h(s): = h_{s}(\gamma) = h(\gamma(s))$.

{\it Step 1.} According to our assumption, every point $R_{0} \in [0, R]$ has a neighborhood $(R_{0}-\ve(R_{0}),R+\ve(R_{0}))$ such that if $R_{1}$
belongs to that neighborhood then \eqref{E:reale} is verified. 
By compactness of $[0, R]$ there exist $x_{1},\dots, x_{n}$ such that the family $\{B_{\ve(x_{i})/2}(x_{i}) \}_{i=1,\dots,n}$ is a covering 
of $[0, R]$.Let $\lambda : = \min \{ \ve(x_{i})/2 : i =1,\dots,n \}$. 
Possibly taking a lower value for $\lambda$, we assume that $\lambda = 2^{-k}\theta$. 
Hence  we have 
\[
\begin{aligned}
h^{-\frac{1}{N-1}}(R_{0} + \frac{1}{2}\theta)   \geq &~
\sigma_{K,N-1}^{(1/2)}(2^{-k+1}\theta)h^{-\frac{1}{N-1}}(R_{0} + \frac{1}{2}\theta -2^{-k}\theta   )   \crcr
&~ +\sigma_{K,N-1}^{(1/2)}(2^{-k+1}\theta) h^{-\frac{1}{N-1}}(R_{0} + \frac{1}{2}\theta + 2^{-k}\theta).
\end{aligned}
\]

{\it Step 2.} We iterate the above inequality:
\[
\begin{aligned}
h^{-\frac{1}{N-1}}(R_{0} + \frac{1}{2}\theta)   \geq &~
\sigma_{K,N-1}^{(1/2)}(2^{-k+1}\theta)h^{-\frac{1}{N-1}}(R_{0} + \frac{1}{2}\theta -2^{-k}\theta   )   \crcr
&~ + \sigma_{K,N-1}^{(1/2)}(2^{-k+1}\theta) h^{-\frac{1}{N-1}}(R_{0} + \frac{1}{2}\theta + 2^{-k}\theta) \crcr
\geq &~  \sigma_{K,N-1}^{(1/2)}(2^{-k+1}\theta) 
\Big[\sigma_{K,N-1}^{(1/2)}(2^{-k+1}\theta) h^{-\frac{1}{N-1}}(R_{0} + \frac{1}{2}\theta -2^{-k+1}\theta )   \crcr
&~ + \sigma_{K,N-1}^{(1/2)}(2^{-k+1}\theta) h^{-\frac{1}{N-1}}(R_{0} + \frac{1}{2}\theta)   \Big]  \crcr
& ~+ \sigma_{K,N-1}^{(1/2)}(2^{-k+1}\theta) 
\Big[\sigma_{K,N-1}^{(1/2)}(2^{-k+1}\theta) h^{-\frac{1}{N-1}}(R_{0} + \frac{1}{2}\theta  )   \crcr
&~ + \sigma_{K,N-1}^{(1/2)}(2^{-k+1}\theta) h^{-\frac{1}{N-1}}(R_{0} + \frac{1}{2}\theta + 2^{-k+1}\theta)   \Big] \crcr
\geq &~  \sigma_{K,N-1}^{(1/2)}(2^{-k+1}\theta)^{2} 
h^{-\frac{1}{N-1}}(R_{0} + \frac{1}{2}\theta -2^{-k+1}\theta )  \crcr
&~ + \sigma_{K,N-1}^{(1/2)}(2^{-k+1}\theta)^{2} 
h^{-\frac{1}{N-1}}(R_{0} + \frac{1}{2}\theta + 2^{-k+1}\theta).
\end{aligned}
\]
Observing that $\sigma_{K,N-1}^{(1/2)}(\alpha)^{2}\geq \sigma_{K,N-1}^{(1/2)}(2\alpha)$, it is fairly easy to obtain: 
\[
\begin{aligned}
h^{-\frac{1}{N-1}}(R_{0} + \frac{1}{2}\theta)   \geq &~
\sigma_{K,N-1}^{(1/2)}(2^{-k+i+1}\theta)  h^{-\frac{1}{N-1}}(R_{0} + \frac{1}{2}\theta -2^{-k+i}\theta   )  \crcr
&~ +\sigma_{K,N-1}^{(1/2)}(2^{-k+i+1}\theta) h^{-\frac{1}{N-1}}(R_{0} + \frac{1}{2}\theta + 2^{-k+i}\theta) ,
\end{aligned}
\]
for every $i = 0,\dots, k$. For $i = k - 1$ Lemma \ref{L:mid} implies the claim.
\end{proof}

\section{From local $\mathsf{CD}(K,N)$ to $\mathsf{MCP}(K,N)$}\label{S:globmcp}

So we have proved that for any $0<R_{0} < R_{1} < R$ the density $h_{r}$, of $p_{r}$ w.r.t. $m_{r}$, 
satisfies the following inequality:
\begin{equation}\label{E:final}
\begin{aligned}
h^{-\frac{1}{N-1}}_{R_{t}}(\gamma)   \geq  \sigma_{K,N-1}^{(1-t)}(R_{1}-R_{0})   h^{-\frac{1}{N-1}}_{R_{0}}(\gamma)  
 +  \sigma_{K,N-1}^{(t)}(R_{1}-R_{0})h^{-\frac{1}{N-1}}_{R_{1}}(\gamma).  
\end{aligned}
\end{equation}
for $\nu$-a.e. $\gamma \in G$ and all $t \in [0,1]$.

Consider $0< r_{0}<r_{1} \leq R$ and the following probability measure
\[
\mu_{0} : = \frac{1}{r_{1}-r_{0}}\int_{(r_{0}, r_{1})} \frac{m_{r}}{s_{r}} dr.
\]
Let $[0,1] \ni t \mapsto \mu_{t} \in \mathcal{P}_{2}(M,d,m)$ be the geodesic connecting $\mu_{0}$ to $\mu_{1} = \delta_{x_{0}}$ 
with $\mu_{t} = \r_{t}m$. Let moreover $\pi_{t} \in \Pi(\mu_{0},\mu_{t})$ the corresponding optimal coupling.

\begin{proposition}\label{P:cdgeod}
Fix $t \in [0,1)$. Then for $\pi_{t}$-a.e. $(z_{0},z_{1}) \in M^{2}$ the following holds true
\begin{equation}\label{E:vera}
\r_{ts}(\gamma_{s}(z_{0},z_{1}))^{-1/N} \geq \r_{0}(z_{0})^{-1/N} \tau_{K,N}^{(1-s)}(d(z_{0},z_{1})) 
+ \r_{t}(z_{1})^{-1/N} \tau_{K,N}^{(s)}(d(z_{0},z_{1})),
\end{equation}
for every $s \in [0,1]$, where $\gamma_{s}(z_{0},z_{1})$ is the $s$-intermediate point on the geodesic $\gamma$ connecting $z_{0}$ to $z_{1}$.
\end{proposition}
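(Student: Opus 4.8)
The plan is to compute, in polar coordinates around $x_{0}$, the three densities $\r_{0}$, $\r_{t}$ and $\r_{ts}$ that enter \eqref{E:vera}, and then to deduce \eqref{E:vera} from the $(N-1)$-dimensional estimate \eqref{E:final} together with an elementary H\"older inequality that absorbs the (linear) radial direction. For the densities: by Lemma \ref{L:map} and the non-branching assumption, the Wasserstein geodesic $\tau\mapsto\mu_{\tau}$ joining $\mu_{0}$ to $\mu_{1}=\delta_{x_{0}}$ is induced by the radial contraction $S_{\tau}$ moving a point at distance $r$ from $x_{0}$ to the point at distance $(1-\tau)r$ on the geodesic joining it to $x_{0}$; hence $\mu_{\tau}$ is concentrated on $\{(1-\tau)r_{0}<d(\cdot,x_{0})<(1-\tau)r_{1}\}$. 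Using the disintegration $m=\int m_{r}\,dr$ of Section \ref{S:polar}, uniqueness of disintegrations, and the fact that (from $p_{r}=\hat h_{r}m_{r}$ and the definition of $h$) the radial map $M_{r}\to M_{r'}$ pushes $m_{r}$ to the measure whose density with respect to $m_{r'}$ at $\gamma_{r'}$ equals $h_{r'}(\gamma)/h_{r}(\gamma)$, one obtains, for $\mathcal{L}^{1}$-a.e.\ radius and $\nu$-a.e.\ $\gamma$,
\[
\r_{\tau}\bigl(\gamma_{(1-\tau)r}\bigr)=\frac{1}{(1-\tau)(r_{1}-r_{0})s_{r}}\cdot\frac{h_{(1-\tau)r}(\gamma)}{h_{r}(\gamma)},\qquad\text{in particular}\qquad\r_{0}(\gamma_{r})=\frac{1}{(r_{1}-r_{0})s_{r}}.
\]
For $\pi_{t}$-a.e.\ pair $(z_{0},z_{1})$ one has $z_{0}=\gamma_{r}$ with $r\in(r_{0},r_{1})$, $z_{1}=\gamma_{(1-t)r}$, $d(z_{0},z_{1})=tr$ and $\gamma_{s}(z_{0},z_{1})=\gamma_{(1-st)r}$, so the three terms in \eqref{E:vera} are $\r_{ts}(\gamma_{(1-st)r})$, $\r_{0}(\gamma_{r})$ and $\r_{t}(\gamma_{(1-t)r})$.

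Next I would reduce \eqref{E:vera} to a one-variable inequality. Put $g(\rho):=h_{\rho}(\gamma)^{-1/(N-1)}$, so that $h_{\rho}(\gamma)^{-1/N}=g(\rho)^{(N-1)/N}$, and recall $\tau_{K,N}^{(\alpha)}(\theta)=\alpha^{1/N}\,\sigma_{K,N-1}^{(\alpha)}(\theta)^{(N-1)/N}$. Substituting the formulas of the previous step into \eqref{E:vera} and cancelling common factors, the assertion becomes
\[
(1-st)^{1/N}g\bigl((1-st)r\bigr)^{\frac{N-1}{N}}\geq(1-s)^{1/N}\bigl[g(r)\,\sigma_{K,N-1}^{(1-s)}(tr)\bigr]^{\frac{N-1}{N}}+\bigl[s(1-t)\bigr]^{1/N}\bigl[g\bigl((1-t)r\bigr)\,\sigma_{K,N-1}^{(s)}(tr)\bigr]^{\frac{N-1}{N}}.
\]
By the identity $1-st=(1-s)+s(1-t)$ together with the elementary inequality $a_{1}^{1/N}b_{1}^{(N-1)/N}+a_{2}^{1/N}b_{2}^{(N-1)/N}\leq(a_{1}+a_{2})^{1/N}(b_{1}+b_{2})^{(N-1)/N}$ for $a_{i},b_{i}\geq0$ (H\"older with conjugate exponents $N$ and $N/(N-1)$), it suffices to prove
\[
g\bigl((1-st)r\bigr)\geq\sigma_{K,N-1}^{(1-s)}(tr)\,g(r)+\sigma_{K,N-1}^{(s)}(tr)\,g\bigl((1-t)r\bigr).
\]

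But this is precisely \eqref{E:reale} (equivalently \eqref{E:final}) applied to the radii $R_{0}:=(1-t)r$ and $R_{1}:=r$, so that $R_{1}-R_{0}=tr$ and $0<R_{0}<R_{1}\leq R$, with interpolation parameter $1-s$: then $R_{1-s}=(1-st)r$, and the two coefficients in \eqref{E:reale} become $\sigma_{K,N-1}^{(s)}(tr)$ (in front of $g(R_{0})=g((1-t)r)$) and $\sigma_{K,N-1}^{(1-s)}(tr)$ (in front of $g(R_{1})=g(r)$), as needed. The $\nu$-null exceptional set in \eqref{E:final} is carried, through the identification of densities above, to a $\pi_{t}$-null exceptional set, which gives \eqref{E:vera}. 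The case $t=0$ is trivial, since then $z_{0}=z_{1}$ and $d(z_{0},z_{1})=0$; and, as in the earlier proofs, it is enough to treat $K>0$, the remaining signs of $K$ requiring only the corresponding interpretation of $\sigma_{K,N-1}$.

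The main obstacle is the first step: one has to check carefully that, along the geodesic towards $\delta_{x_{0}}$, the full-dimensional density $\r_{\tau}$ is governed in the angular directions exactly by the $(N-1)$-dimensional quantity $h$ built in Section \ref{S:polar}, and that the resulting radial factor is consistent with the disintegration $m=\int m_{r}\,dr$ (this is where uniqueness of the disintegration and the Jacobian $h_{r'}/h_{r}$ of the radial map play a role). Once this bookkeeping is done, the passage from the $(N-1)$-dimensional estimate \eqref{E:final} to the full $\mathsf{CD}(K,N)$-type estimate \eqref{E:vera} is exactly the ``H\"older plus linear radial direction'' mechanism announced in the introduction.
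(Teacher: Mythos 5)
Your proposal is correct and follows essentially the same route as the paper's own proof: write the densities of $\mu_{0},\mu_{t},\mu_{ts}$ in polar coordinates, split the full-dimensional estimate via H\"older into a linear radial factor and an $(N-1)$-dimensional angular factor, and apply \eqref{E:final} with $R_{0}=(1-t)r$, $R_{1}=r$. The only superficial difference is notational: you express all densities via a single fixed reference radius $R$ using the Jacobian relation $h_{r,\rho}=s_{r}^{-1}\,h_{R,\rho}/h_{R,r}$, whereas the paper re-anchors the double-indexed density $h_{\bar r,\cdot}$ at each radius $\bar r$; these are equivalent and your more explicit bookkeeping of the Jacobian is in fact a small clarification.
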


\begin{proof}
We use the following notation: for a given $R$ consider the geodesic $(p_{R,r})_{s\in [0,R]}$ with $p_{R,0} = \delta_{x_{o}}$ and $p_{R,R} = m_{R}$.
The same rule will apply to densities $h_{R,r}$.

Let $[0,1] \ni s \mapsto \Gamma_{st} : = \mu_{st}$ and observe that
\begin{equation}\label{E:geo}
\Gamma_{s}= \mu_{st}= \frac{1}{(1-st)(r_{1}-r_{0})}\int_{(1-st)( r_{0}, r_{1})} h_{r/(1-t), r} m_{r} dr.
\end{equation}

Consider $x_{0} \in M_{\bar r}$ with $r_{0} \leq \bar r \leq r_{1}$. Then the unique $x_{1}$ such that $(x_{0},x_{1})$ is in the support of the optimal plan $\pi_{t}$, belongs to $M_{(1-t)\bar r}$.
Then from Theorem \ref{T:surface} and  \eqref{E:geo} 
\[ 
\begin{aligned}
\big( (r_{1}-r_{0})\r_{st}(\gamma_{s}(x_{0},x_{1})\big)^{-1/N} = &~ \Big( \frac{1}{1-st} h_{\bar r, (1-st)\bar r }(\gamma) \Big)^{-1/N} \crcr
= &~ \Big(\frac{1}{(1-t)s +1-s} \Big)^{-\frac{1}{N}} \Big(h^{-\frac{1}{N-1}}_{\bar r, (1-t)s\bar r +(1-s)\bar r}(\gamma)) \Big)^{\frac{N-1}{N}}  \crcr
\geq &~ (1-s)^{1/N} \Big(\sigma_{K,N-1}^{(1-s)}(t \bar r) h^{-\frac{1}{N-1}}_{\bar r, \bar r  }(\gamma)   \Big)^{\frac{N-1}{N}}  \crcr
+ &~ ((1-t)s)^{1/N} \Big(\sigma_{K,N-1}^{(s)}(t \bar r) h^{-\frac{1}{N-1}}_{\bar r, (1-t)\bar r}(\gamma)   \Big)^{\frac{N-1}{N}} \crcr
=&~ \tau_{K,N}^{(1-s)}(d(z_{0},z_{1}))  \big((r_{1} - r_{0})\r_{0}(x_{0})\big)^{-1/N} \crcr
+ &~  \tau_{K,N}^{(s)}(d(z_{0},z_{1})) \big((r_{1} - r_{0})\r_{t}(z_{1})\big)^{-1/N}
\end{aligned}
\]
The claim follows.
\end{proof}

So far we have proven that given $\mu_{0}: = m(A)^{-1} m\llcorner_{A} $, $x_{0} \in \supp[m]$ and the unique geodesic 
$[0,1] \ni t \mapsto \Gamma(t)$ such that $\Gamma(0) =\mu_{0}$, $\Gamma(1) =\delta_{x_{0}}$ and $\Gamma(t) = \r_{t}m$ for $t \in [0,1)$
we have for any $t \in [0,1)$:
\[
\begin{aligned}
\mathcal{S}_{N'}(\Gamma(ts)|m) \leq   - \int_{M\times M}& \Big[ \tau_{K,N'}^{(1-s)}(d(x_{0},x_{1}))\r_{0}^{-1/N'}(x_{0})  \crcr
&~ + \tau_{K,N'}^{(s)}(d(x_{0},x_{1}))\r_{t}^{-1/N'}(x_{1})  \Big]  \pi_{t}(dx_{0}dx_{1}),
\end{aligned}
\]
for all $s \in [0,1]$ and all $N'\geq N$, where $\pi_{t}  =  (P_{0},P_{t})_{\sharp} \Xi$.

We are ready to prove the main theorem of this chapter.
\begin{theorem}\label{T:main}
Let $(M,d,m)$ be a non-branching metric measure spaces satisfying $\mathsf{CD}_{loc}(K,N)$. Then 
$(M,d,m)$ satisfies $\mathsf{MCP}(K,N)$.
\end{theorem}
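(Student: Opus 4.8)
The strategy is to promote the pointwise estimate of Proposition \ref{P:cdgeod} to the Markov-kernel formulation of $\mathsf{MCP}(K,N)$ by letting the source set $A$ shrink to an arbitrary point. First I would fix $o\in M$ (already done in Section \ref{S:polar}) and, for a measurable set $A$ with $0<m(A)<\infty$, set $\mu_0:=m(A)^{-1}m\llcorner_A$, $\mu_1:=\delta_o$, and let $\Gamma(t)=\r_t m$ ($t\in[0,1)$) be the unique geodesic joining them, with $\pi_t$ the optimal coupling of $\mu_0$ and $\Gamma(t)$ and $\Xi$ the lift to $\G(M)$ given by Lemma \ref{L:geod}. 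The radial disintegration of Section \ref{S:polar} together with the $(N-1)$-dimensional estimate \eqref{E:final}, fed through the H\"older-type computation already carried out in the proof of Proposition \ref{P:cdgeod}, yields the full-dimensional inequality \eqref{E:vera}; rephrasing it as in \eqref{E:cdpunto} gives, for $m$-a.e.\ $x_0\in A$ and a.e.\ intermediate point $z=\gamma_t(x_0,x_1)$,
\[
\r_t(z)\ \le\ \big(\tau^{(1-t)}_{K,N}(d(x_0,x_1))\,\r_0^{-1/N}(x_0)\big)^{-N}\ \le\ m(A)\,\varsigma^{(t)}_{K,N}(d(x_0,x_1))^{-1},
\]
where $x_1$ is the unique point with $(x_0,x_1)\in\supp\pi_t$ and I used $\r_0=m(A)^{-1}$ on $A$ and dropped the nonnegative $\r_t^{-1/N}(x_1)$ term.

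Next I would define the candidate Markov kernel. Using the measurable selection of geodesics from Lemma \ref{L:map} and the map $(x_0,x_1)\mapsto\gamma_t(x_0,x_1)$, push forward: for the optimal transport from $\delta_o$ to $\mu_0$ the intermediate-point map sends $x_0\in M$ to the $t$-intermediate point $\gamma_t(o,x_0)$, so one sets $Q_t(o,x_0;\cdot):=\delta_{\gamma_t(o,x_0)}$ and, symmetrically, uses the transport from $\delta_o$ to $m$ to define $Q_t(x,o;\cdot)$ for the second inequality in \eqref{E:MCP}. More precisely, for $m$-a.e.\ pair one wants $Q_t(x,y;\cdot)$ concentrated on $t$-intermediate points of $x$ and $y$; since we only transport to or from the fixed point $o$, and $o$ was arbitrary, varying $o$ over a countable dense set and invoking non-branching plus Lemma \ref{L:map} gives a kernel defined $m^2$-a.e. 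The contraction estimate \eqref{E:MCP} is then obtained by integrating the pointwise bound above over $B$: for a Borel set $B$,
\[
\int_M \varsigma^{(t)}_{K,N}(d(x,y))\,Q_t(x,y;B)\,m(dy)\ =\ \int_M \varsigma^{(t)}_{K,N}(d(x,y))\,1_B(\gamma_t(x,y))\,m(dy),
\]
and changing variables via the contraction map $y\mapsto\gamma_t(x,y)$ — whose image has $m$-density bounded using the inequality for $\r_t$ with $A$ replaced by small balls around $x$ and a monotone-class/differentiation argument letting $A\downarrow\{x\}$ — shows this is $\le m(B)$. The second inequality in \eqref{E:MCP} follows by the same reasoning with the roles of the endpoints reversed, replacing $\tau^{(t)}$ by $\tau^{(1-t)}$ and using that the transport now starts at $o$ and ends at a point of mass $m$.

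The main obstacle is the passage from the ``averaged'' statement, where the source is $\mu_0=m(A)^{-1}m\llcorner_A$ with $A$ of positive measure, to the genuine Markov-kernel/Dirac formulation of $\mathsf{MCP}(K,N)$: one must show that the contraction map $y\mapsto\gamma_t(x,y)$ is (essentially) injective and that its effect on $m$ is controlled by $\varsigma^{(t)}_{K,N}$ \emph{for $m$-a.e.\ individual $x$}, not merely in the integrated sense over $A$. This requires a careful disintegration and differentiation argument: one covers $M$ by the spheres $M_r=\partial B_r(o)$, uses Lemma \ref{L:surfacevo} and the density $h_r$ together with the surface-measure disintegration $m=\int m_r\,dr$, and then differentiates the bound for $\r_t$ as $A$ ranges over a Vitali-type family shrinking to $x$; non-branching is used crucially to guarantee that distinct $y$'s produce distinct intermediate points so that no measure-theoretic overlap inflates the image density. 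Once this localization is in place, the inequalities \eqref{E:MCP} are a direct consequence of \eqref{E:vera} and the definition $\varsigma^{(t)}_{K,N}=(\tau^{(t)}_{K,N})^N$, together with the monotonicity properties of $\tau$ and $\sigma$ recorded in Section \ref{S:preli}.
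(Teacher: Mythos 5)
Your overall skeleton matches the paper's: the Markov kernel is the Dirac mass $Q_t(x,y;\cdot)=\delta_{\gamma_t(x,y)}$ built from the measurable geodesic selection of Lemma \ref{L:map}, and the contraction estimate is supposed to follow from the pointwise density inequality \eqref{E:vera} of Proposition \ref{P:cdgeod}. However, the key step — how to promote \eqref{E:vera}, which is stated for a geodesic from $\mu_0 = m(A)^{-1}m\llcorner_A$ to $\delta_{x_0}$, to the integral inequality \eqref{E:MCP} — is misidentified in your write-up, and this is the only place where real work remains.

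You propose a ``Vitali-type differentiation argument letting $A\downarrow\{x\}$''. This does not make sense in the given geometry: $A$ is the set of \emph{source} points $y$ being transported towards the pole $x$, so shrinking $A$ to $\{x\}$ degenerates the transport to a trivial one and yields no information about the image density at a generic $z=\gamma_t(x,y)$. (If you meant to shrink around the \emph{target} point $z$, that is a different argument and you would need to say so and set it up.) The paper does something else: for a fixed $x_0$ in the full-measure set $M_0$ and an arbitrary Borel set $B$, it takes $A_0:=\gamma_t(x_0,\cdot)^{-1}(B)$ — i.e.\ exactly the preimage of $B$ under the contraction map, with no shrinking — and applies the density estimate to the geodesic from $m(A_0)^{-1}m\llcorner_{A_0}$ to $\delta_{x_0}$ to get
\[
m(B)\ \geq\ \inf_{y\in A_0}\varsigma^{(t)}_{K,N}(d(y,x_0))\,m(A_0).
\]
The passage from this $\inf$-estimate to the integral estimate \eqref{E:MCP} is then achieved not by a Vitali covering but by decomposing $B$ into thin annuli $B_i=B\cap(\bar B_{\ve i}(x_0)\setminus\bar B_{\ve(i-1)}(x_0))$, applying the $\inf$-estimate on each annulus, summing, and letting $\ve\to0$. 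This is the step your proposal is missing. Similarly, the remark about ``varying $o$ over a countable dense set'' is unnecessary and does not by itself give the estimate for $m$-a.e.\ $x$: the paper simply works with an arbitrary $x_0\in M_0$ where $M_0$ has full measure, using Lemma \ref{L:map} directly, so no density argument on the set of poles is needed.
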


\begin{proof}
{\it Step 1.}
Let $\gamma : M^{2} \to \G(M)$ be the map introduced in Lemma \ref{L:map} and define for each $t \in [0,1]$
a Markov kernel $Q_{t}$ from $M^{2}$ to $M$ by
\[
Q_{t}(x,y;B) : = 1_{B}(\gamma_{t}(x,y))
\]
and for each pair $t,x$ a measure $m_{t,x} = \int Q_{t}(x,y; \cdot) m(dy)$.

For each $x \in M$ let $M_{x}$ denote the set of all $y \in M$ for which there exists a unique geodesic connecting $x$ and $y$ 
and let $M_{0}$ be the set of $x$ such that $m(M\setminus M_{x})=0$. By assumption $m(M\setminus M_{0})=0$.

{\it Step 2.}
Fix $x_{0} \in M_{0}$ and $B \subset M$. Put $A_{0}:= \gamma_{t}(x_{0},\cdot)^{-1}(B)$ and $\mu_{0}: =m(A_{0})^{-1} m\llcorner_{A_{0}}$.
Considering $s=1$ in \eqref{E:vera} it follows that 
\[
m(B)^{1/N} \geq \inf_{y \in A_{0}} \tau_{K,N}^{(t)}(d(y,x_{0})) m(A_{0})^{1/N},
\]
or equivalently
\[
m(B) \geq \inf_{y \in \gamma_{t}(x_{0},\cdot)^{-1}(B)} \varsigma_{K,N}^{(t)}(d(y,x_{0})) m(\gamma_{t}(x_{0},\cdot)^{-1}(B)) = 
\inf_{z\in B} \varsigma^{(t)}_{K,N}\bigg(\frac{d(z,x_{0})}{t}\bigg) m_{t,x_{0}}(B).
\]
Decomposing $B$ into a disjoint union $\cup_{i}B_{i}$ with $B_{i} = B \cap (\bar B_{\ve i}(x_{0}) \setminus \bar B_{\ve (i-1)}(x_{0})$, 
and applying the previous estimate to each  of the $B_{i}$ we obtain as $\ve \to 0$
\[
m(B) \geq \int_{B} \varsigma^{(t)}_{K,N} \bigg( \frac{d(z,x_{0})}{t}\bigg) m_{t,x_{0}}(dz)
\]
or equivalently
\[
m(B) \geq \int_{B} \varsigma^{(t)}_{K,N} ( d(z,x_{0} )) Q_{t}(x_{0},y; B)m(dy).
\]
\end{proof}

\section*{Outlook}

In the last part of this note we sketch the most general case we can address using the approach introduced so far. 
We start recalling the definition of $d$-transform:  
for $f : M \to \bar \erre$ Borel measurable 
\[
f^{d}(y) : = \inf_{x \in M}\frac{d^{2}(x,y)}{2} - f(x),
\]
$f^{d}$ is the $d$-transform of $f$. Accordingly, a map is $d$-concave if it can be written as the $d$-transform of another map.

\subsection*{The Setting}
Let $A \subset M$ be a Borel set and define  the map $\f_{A}(x) : =d^{2}(A,x)/2$
where $d(A,x) : = \inf \{ d(z,x) : z \in A\}$. Clearly $\f_{A}$ is $d$-concave, indeed if 
\[
\infty_{A} (x) : = 
\begin{cases} 0 & x \in A  \crcr 
+ \infty & x \notin  A,
\end{cases}
\]
then $\f_{A} = \infty_{A}^{d}$.

\begin{definition}\label{D:convex}
Let $A \subset M$ be a closed set. The set $A$ is \emph{d-convex} if 
\begin{equation}\label{E:conv}
(-\infty_{A})^{dd} = - \infty_{A}.
\end{equation}
\end{definition}

\begin{remark}
In the Euclidean case, i.e. $\erre^{n}$ equipped with euclidean distance, Definition \ref{D:convex} is equivalent to the standard notion of convexity.
Indeed for $f : \erre^{n} \to \bar \erre$ with $f > -\infty$ and not identically $+\infty$, consider the Legendre transform
\[
f^{*}(y) : = \sup_{x \in \erre^{n}} \langle y,x\rangle - f(x).
\]
Then it is well-know that $f^{**} = f$ if and only if $f$ is convex and l.s.c. (see for instance \cite{rachevru:transport}, Chapter 3). Since
\[
(-f)^{dd}(x) = \| x \|^{2} - (f + \|\cdot \|^{2})^{**}(x), 
\]
it is fairly easy to conclude that $(-\infty_{A})^{dd} = - \infty_{A}$ is equivalent to convexity, provided $A$ is a closed set.
\end{remark}

We will prove the analogous of Proposition \ref{P:cdgeod} only for those optimal transport plan having $(\f_{A}, -\infty_{A})$ as Kantorovich potentials.
Define the set  
\[
\Gamma_{A} : = \{ (x,y) \in M \times M : \f_{A}(x) -\infty_{A}(y) = d^{2}(x,y) \} = \left\{ (x,y) \in M\times A : \f_{A}(x) = \frac{d^{2}(y,x)}{2}\right\}
\]
and the corresponding family of optimal dynamical transference plan 
\[
\gammA_{A}: = \{ \gammA \in \mathcal{P}(\G(M)) : (e_{0},e_{1})_{\sharp}(\gammA)(\Gamma_{A}) = 1, e_{0\,\sharp}\gammA= \r_{0}m \}. 
\]

\begin{theorem}
Let $A \subset M$ be compact and $d$-convex. Then every $\gammA \in \gammA_{A}$ satisfies the following: 
for every $t \in [0,1)$ we have $e_{t\,\sharp} \gammA = \r_{t}m$ and
\[
\begin{aligned}
\mathcal{S}_{N'}(e_{ts\,\sharp}\gammA|m) \leq   - \int_{M\times M}& \Big[ \tau_{K,N'}^{(1-s)}(d(x_{0},x_{1}))\r_{0}^{-1/N'}(x_{0})  \crcr
&~ + \tau_{K,N'}^{(s)}(d(x_{0},x_{1}))\r_{t}^{-1/N'}(x_{1})  \Big]  \pi_{t}(dx_{0}dx_{1}),
\end{aligned}
\]
for all $s \in [0,1]$ and all $N'\geq N$, where $\pi_{t}  =  (e_{0},e_{t})_{\sharp} \gammA$.
\end{theorem}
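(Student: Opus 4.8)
The plan is to reduce the statement to the radial/polar picture already developed for Dirac-to-uniform transport in Sections \ref{S:polar}--\ref{S:globmcp}, replacing the role of the single base point $o$ by the $d$-convex set $A$. The first step is structural: for $\gammA \in \gammA_A$, the coupling $(e_0,e_1)_\sharp\gammA$ is concentrated on $\Gamma_A$, so $\gammA$-a.e.\ geodesic $\eta$ starts at a point $x_0$ whose $d$-distance to $A$ is realized at $x_1 = \eta_1 \in A$, and $\eta$ is (after reparametrization) a geodesic realizing $d(x_0,A)$; moreover, by $d$-convexity of $A$ and the non-branching assumption, for $t\in[0,1)$ the intermediate point $\eta_t$ has $d(\eta_t,A) = (1-t)d(x_0,A)$, with $x_1$ still the unique foot point. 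This lets me foliate the relevant region of $M$ by the ``level sets'' $A_r := \{x : d(x,A) = r\}$ in place of the spheres $M_r = \partial B_r(o)$, and disintegrate $m = \int m_r\,dr$ along $r \mapsto d(\cdot,A)$ exactly as in Section \ref{S:polar}; the same Polish-quotient argument gives strong consistency, and the reduced Bishop--Gromov-type surface inequality \eqref{E:surfacevo} is replaced by the corresponding estimate for the function $r\mapsto m(\{d(\cdot,A)\le r\})$, again available from \cite{sturm:loc,sturm:MGH2}.

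Next I would rerun the $(N-1)$-dimensional machinery verbatim in this foliation. Fixing a geodesic $\eta$ of the transport (equivalently a point of the space of geodesics), the density $h_r$ of the pushforward of $\gammA$ onto $A_r$ satisfies, by $\mathsf{CD}_{loc}(K,N)\Rightarrow\mathsf{CD}^*(K,N)$ applied to the ``radial slab'' geodesics as in Lemma \ref{L:regular}, the differential inequality $\partial_r^2 h_r^{-1/N}(\eta) \le -\tfrac{K}{N}h_r^{-1/N}(\eta)$ in the distributional sense, hence the local midpoint inequality of Theorem \ref{T:surface} and, via Lemma \ref{L:mid} and Theorem \ref{T:loctoglob}, the global $(N-1)$-dimensional estimate \eqref{E:final} for all $0<R_0<R_1\le R$. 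Then the Hölder-type recombination of Proposition \ref{P:cdgeod} — splitting $\tau_{K,N}^{(s)} = \big((1-s)\,\text{or}\,\,s\big)^{1/N}(\sigma_{K,N-1}^{(s)})^{(N-1)/N}$ and pairing the linear radial factor with the $(N-1)$-dimensional curved factor — upgrades this to the full-dimensional pointwise inequality $\r_{ts}(\gamma_s(z_0,z_1))^{-1/N} \ge \tau_{K,N}^{(1-s)}(d(z_0,z_1))\r_0(z_0)^{-1/N} + \tau_{K,N}^{(s)}(d(z_0,z_1))\r_t(z_1)^{-1/N}$ for $\pi_t = (e_0,e_t)_\sharp\gammA$-a.e.\ $(z_0,z_1)$ and all $s\in[0,1]$. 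Integrating against $\pi_t$ and using the convexity of $u\mapsto u^{-1/N'}$ together with the Jensen/entropy bookkeeping (exactly the passage from \eqref{E:cdpunto}-type inequalities to \eqref{E:CD}) yields the claimed entropy inequality for $\mathcal{S}_{N'}(e_{ts\,\sharp}\gammA|m)$, for all $N'\ge N$.

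The main obstacle — and the only place where $d$-convexity of $A$ is genuinely used rather than just compactness — is the structural Step 1: one must know that the foliation by $\{d(\cdot,A)=r\}$ is compatible with the optimal transport, i.e.\ that along $\gammA$-a.e.\ geodesic the $d$-distance to $A$ decreases affinely and the foot point is preserved, so that the transport really does ``not change the angular part.'' In the Dirac case this was automatic because geodesics from $o$ obviously have $d(\eta_t,o) = (1-t)d(\eta_0,o)$; for a set $A$ one needs $(-\infty_A)^{dd} = -\infty_A$ to guarantee that $\f_A(\eta_t) = (1-t)^2\f_A(\eta_0)$ along $\gammA$-a.e.\ $\eta$ and that $\eta$ stays inside the region where the disintegration is controlled — a branching or non-convex $A$ would allow a geodesic to leave a ``shell'' and re-enter, destroying the one-dimensional radial structure. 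Once this compatibility is established, the remaining steps are, as indicated, word-for-word the arguments of Sections \ref{S:reduction}--\ref{S:globmcp} with $o$ replaced by $A$.
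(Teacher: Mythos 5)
Your overall strategy (foliate by the level sets of $d(\cdot,A)$, disintegrate $m$, rerun the $(N-1)$-dimensional midpoint inequality, propagate it by Lemma~\ref{L:mid} and Theorem~\ref{T:loctoglob}, and recombine as in Proposition~\ref{P:cdgeod}) is exactly the paper's strategy, and your identification of where $d$-convexity is needed — keeping each geodesic within a single ``radial'' fibration so that the angular part of the transport is frozen — is also the right one. However there are two concrete gaps in the way you carry this out.

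First, part of the statement to be proved is that $e_{t\,\sharp}\gammA = \r_{t}m$ for every $t\in[0,1)$, i.e.\ that the interpolating measures are absolutely continuous even though $\mu_{1}$ is singular (it sits on $\partial A$). You never address this; you jump straight to ``the density $h_r$ of the pushforward of $\gammA$ onto $A_r$,'' presupposing the very thing that has to be shown. The paper derives this from $\mathsf{CD}(K,N)\Rightarrow\mathsf{CD}_{LV}(K,N)$ (valid under non-branching) together with the Lott--Villani intermediate-time absolute continuity result (\cite{villa:Oldnew}, Theorem~30.19), and it is precisely this $\mu_{t}\ll m$ statement — not a Bishop--Gromov-type bound — that is used later to identify $p_{r}$ with $h_{r}m_{r}$.

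Second, to get $q\ll\mathcal{L}^{1}$ in the disintegration $m\llcorner_{\Gamma_{A}(1)} = \int \bar m_{r}\,q(dr)$, you invoke ``the corresponding estimate for $r\mapsto m(\{d(\cdot,A)\le r\})$, again available from \cite{sturm:loc,sturm:MGH2}.'' That is not available there: the Bishop--Gromov/surface-volume estimates in those references are for balls and spheres around a single point $o$, not for tubular enlargements of a general compact $d$-convex set $A$, and there is no reason the same argument carries over. The paper instead observes that $x\mapsto d(A,x)$ is Lipschitz with strictly positive upper gradient away from $A$ and applies the coarea formula in metric measure spaces (Miranda, \cite{miranda:bvcoarea}, Proposition~4.2) to obtain $m\llcorner_{\Gamma_{A}(1)} = \int m_{r}\,\mathcal{L}^{1}(dr)$ directly. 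You need this (or an equivalent argument) in place of the Bishop--Gromov appeal; as written that step would fail.
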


We present an outline of the proof.

\begin{proof}
Due to non-branching assumption, 
$\mathsf{CD}(K,N)$ implies $\mathsf{CD}_{LV}(K,N)$, introduced by Lott and Villani in \cite{villott:curv}.  
The latter implies that every geodesic consists of absolute continuous measures at intermediate times, 
whenever one of the two endpoints is absolute continuous (see \cite{villa:Oldnew} Theorem 30.19). 

The proof of this result is preserved if we replace all the coefficients $\tau_{K,N}$ by coefficients $\sigma_{K,N}$.
The corresponding curvature-dimension condition $\mathsf{CD}^{*}_{LV}(K,N)$ follows from our condition $\mathsf{CD}_{LV}(K,N)$,  
due to the non-branching assumption. It follows therefore that 
\[
e_{t\,\sharp} \gammA = \r_{t} m, \qquad \forall t \in [0,1).
\]
Note that  $\mu_{1} (\partial A) = 1$. Therefore every geodesic belonging to the support of $\gammA$ never enters inside $A$.

\subsection*{Polar Coordinates} Consider the following set 
\[
\Gamma_{A}(1): = \{ \gamma_{t} : (\gamma,t) \in \supp(\gammA) \times [0,1) \}. 
\]
We only need a disintegration of $m$ restricted to $\Gamma_{A}(1)$. Denote with
\[
B_{r}(A) : = \{ x : d_{A}(x)\leq r\}.
\]
Consider the family $\{\partial B_{r}(A) \}_{r > 0}$ giving a partition of $\Gamma_{A}(1)$. It follows that 
\[
m\llcorner_{\Gamma_{A}(1)} = \int_{(0,\infty)}  \bar m_{r} q(dr), \qquad \bar m_{r}(\partial B_{r}(A))=1.
\]

Since the map $d(A,x)$ is Lipschitz and with strictly positive upper gradient on $\partial B_{r}(A)$ for $r>0$, 
it follows from the coarea formula in metric measure spaces (see Proposition 4.2 of \cite{miranda:bvcoarea}) that  
\begin{equation}\label{E:disintA}
m\llcorner_{\Gamma_{A}(1)} = \int_{(0,\infty)}  \bar m_{r} q(r) \mathcal{L}^{1}(dr) = 
\int_{(0,\infty)}   m_{r} \mathcal{L}^{1}(dr).
\end{equation}

\subsection*{Estimate in codimension 1}
In the same way we disintegrate $\gammA$:
\[
\gammA = \int \gammA_{r} dr, \qquad \|\gammA_{r}\|^{-1} \gammA_{r}(\{ \gamma : d(A,\gamma_{0})=r \})=1.
\]  
So fix $R$ and consider the constant speed geodesic $(p_{r})_{r\in [0,R]}$ such that $p_{R} = e_{0\,\sharp}\|\gammA_{R}\|^{-1} \gammA_{R}$
and $p_{0} = e_{1\,\sharp}\|\gammA_{R}\|^{-1} \gammA_{R}$.
Since $\mu_{t} \ll m$ for every $t \in [0,1)$ it follows that 
\[
p_{r} =  h_{r} m_{r}.
\]
Now we can consider the family of geodesics \eqref{E:curve} depending on $R_{i}$ and $L_{i}$ for $i=1,2$.
The very same proof of Theorem \ref{T:surface} gives 
for sufficiently close $0<R_{0}<R_{1}$ the following:
\[
h_{R_{1/2}}^{-\frac{1}{N-1}}(\gamma)  
 \geq  \sigma_{K,N-1}^{(1/2)}(R_{1}-R_{0}) \left\{   h_{R_{0}}^{-\frac{1}{N-1}}(\gamma) +   h_{R_{1}}^{-\frac{1}{N-1}}(\gamma) \right\}.
\]
As already shown during this note, the above estimates passes from local to global and therefore it holds true for any $0< R_{0} <R_{1}$.

\subsection*{Full dimensional estimate} We use the following notation: 
for a given $R$ consider the geodesic $(p_{R,r})_{s\in [0,R]}$ with $p_{R,0} = e_{1\,\sharp} (\|\gammA_{R}\|^{-1}\gammA_{R})$ and 
$p_{R,R} = e_{0\,\sharp} (\|\gammA_{R}\|^{-1}\gammA_{R})$.
Then for every $t\in [0,1)$:
\[
p_{R,(1-t)R} = \frac{\r_{t}}{\int \r_{t}m_{(1-t)R}} m_{(1-t)R} .
\]
Since 
\[
\int_{\{R>0\}} m_{(1-t)R} \mathcal{L}^{1}(dR) = \frac{1}{1-t} m\llcorner_{\Gamma_{A}(1)},
\]
we find the information on the density of the missing direction in the following way: 
\[
\mu_{t} =  \int_{\{R>0\}} \frac{\r_{t}}{\int \r_{t}m_{(1-t)R}}  m_{(1-t)R}   \bigg( \int \r_{t}m_{(1-t)R} \bigg) dR = 
\frac{1}{(1-t)} \int_{\tau>0} \frac{\r_{t}}{\int \r_{t}m_{\tau}}\bigg( \int \r_{t}m_{\tau} \bigg) m_{\tau}  d\tau.
\]
It follows that the inverse of the 1-dimensional density evolves linearly with $t$. Imitating the proof of Proposition \ref{P:cdgeod},
for $t \in [0,1)$, for $(e_{0},e_{t})_{\sharp}\gammA$-a.e. $(z_{0},z_{1}) \in M^{2}$ the following holds true
\[
\r_{ts}(\gamma_{s}(z_{0},z_{1}))^{-1/N} \geq \r_{0}(z_{0})^{-1/N} \tau_{K,N}^{(1-s)}(d(z_{0},z_{1})) 
+ \r_{t}(z_{1})^{-1/N} \tau_{K,N}^{(s)}(d(z_{0},z_{1})),
\]
for every $s \in [0,1]$, where $\gamma_{s}(z_{0},z_{1})$ is the $s$-intermediate point on the geodesic $\gamma$ connecting $z_{0}$ to $z_{1}$.
Integrating the previous inequality, we have the claim.
\end{proof}

To conclude this note we want to list the differences between the general globalization theorem, our case and the measure contraction-property.
Assume that $(M,d,m)$ satisfies $\mathsf{CD}_{loc}(K,N)$ and let $\mu_{0}$ be an absolutely 
continuos probability measure and $\f$ be a $d$-concave Kantorovich potential for a dynamical optimal transference plan:

\begin{itemize}
\item[$\mathsf{MCP}$:]  prove \eqref{E:vera} for every $\mu_{0}$ and every $\f = (-\infty_{\{z\}})^{d}$, for every $z \in M$;   
\item [\emph{Note}:]   prove \eqref{E:vera} for every $\mu_{0}$ and every $\f = (-\infty_{A})^{d}$, with $A$ $d$-convex;  
\item [$\mathsf{CD}$:]   prove \eqref{E:vera} for every $\mu_{0}$ and every $\f$;
\end{itemize}
where by $Note$ we mean the level of generality obtained in this paper. It is clear that $\mu_{1}$ is determined by the choice of $\mu_{0}$
and $\f$.

\end{document}